\documentclass[11pt,a4paper]{amsart}
\usepackage {amsmath, amssymb, a4wide, epsfig, enumerate, psfrag}
\usepackage[latin1]{inputenc}
\usepackage[english]{babel}
\usepackage[active]{srcltx}

%\usepackage{showkeys}
%\usepackage[all]{xy}
%\CompileMatrices
\date{\today}

\author{Bertrand Deroin \and Nicolas Tholozan}
\thanks{B.D.'s research was partially supported by ANR-08-JCJC-0130-01,  ANR-09-BLAN-0116.}
\address{CNRS \\ UMR 8553 \\ D\'epartement de Math\'ematiques et Applications  \\ \'Ecole Normale Sup\'erieure \\ 45, rue d'Ulm \\ 75005 Paris, France.}
\email{bertrand.deroin@ens.fr}
\address{University of Luxembourg \\
Campus Kirchberg \\
Mathematics Research Unit \\
6, rue rue Richard Coudenhove-Kalergi \\
L-1359 Luxembourg}
\email{nicolas.tholozan@uni.lu}

\title{Dominating surface group representations by \mbox{Fuchsian ones}}

%\subjclass[2000]{}

%%%%%%%%%%%%%%%%%%%%%% commandes  math %%%%%%%%%%%%%%%%%%%%%%%%%%%

\newcommand{\R}{\mathbb{R}}

\newcommand{\norm}[1]{\left\Vert#1\right\Vert}

\newcommand{\Teich}{\mathcal{T}}

\newcommand{\C}{\mathbb{C}}

\renewcommand{\H}{\mathbb{H}}

\newcommand{\Isom}{\mathrm{Isom}}

\newcommand{\Tr}{\mathrm{Tr}}

\renewcommand{\d}{\mathrm{d}}

\renewcommand{\epsilon}{\varepsilon}

\newcommand{\PSL}{\mathrm{PSL}}

\newcommand{\PSO}{\mathrm{PSO}}

\renewcommand{\phi}{\varphi}
\newcommand{\PSU}{\mathrm{PSU}}

\DeclareMathOperator{\codim}{codim}

\newcommand{\Vol}{\mathrm{Vol}}

\renewcommand{\tilde}[1]{\widetilde{#1}}

\newtheorem{prop} {Proposition} [section]

\newtheorem{defi}[prop] {Definition}
\newtheorem{lem}[prop] {Lemma}

\newtheorem{theo}{Theorem}
 %version A' du théorème principal

\newtheorem{coro}[theo]{Corollary}

\theoremstyle{theorem}
\newtheorem*{CiteThm}{Theorem}
\theoremstyle{remark}

\newtheorem{rmk}[prop]{Remark}

\newtheoremstyle{nosThm}
{0.3cm} %Espace avant
{0.3cm} %Espace après
{\itshape} %police du corps du texte
{} %Indentation
{\scshape} %Police du titre
{.} %Ponctuation après le titre
{\newline} % espace après le titre
{} % Thm head spec

\theoremstyle{nosThm}

\begin{document}

\begin{abstract} We prove that a representation from the fundamental group of a closed surface of negative Euler characteristic with values in the isometry group of a Riemannian manifold of sectional curvature bounded by $-1$ can be dominated by a Fuchsian representation. Moreover, we prove that the domination can be made strict, unless the representation is discrete and faithful in restriction to an invariant totally geodesic $2$-plane of curvature $-1$. When applied to representations into $\PSL(2,\R)$ of non-extremal Euler class, our result is a step forward in understanding the space of closed anti-de Sitter $3$-manifolds. \end{abstract} 

\maketitle 

\section*{Introduction}

Let $\Gamma$ be a group, and $\rho_i : \Gamma \rightarrow \text{Isom} (M_i, g_i) $ be representations of $\Gamma$ in the groups of isometries of Riemannian manifolds $(M_i, g_i)$, for $i = 1,2$. We will say that $\rho_1$ \textit{dominates} $\rho_2$ if there exists a $1$-Lipschitz map $ f : M_1 \rightarrow M_2 $ which is $(\rho_1, \rho_2)$-equivariant, i.e. which satisfies  
\begin{equation} \label{eq:equivariance}  f( \rho_1(\gamma) \cdot x) = \rho_2(\gamma) \cdot f(x) \end{equation}
for every $x\in M_1$ and $\gamma \in \Gamma$. The domination will be called \textit{strict} if the map $f$ can be chosen to be $\lambda$-Lipschitz for some constant $0<\lambda<1$. 

We will be mainly interested in the case where $\Gamma$ is the fundamental group of a closed oriented connected surface of negative Euler characteristic. Those surfaces are endowed with metrics of constant negative curvature $-1$, and any such metric gives rise to an isometric identification of $S$ with a quotient $j(\Gamma) \backslash \H^2$, where $\H^2$ is the Poincar\'e half-plane and $j$ a representation of $\Gamma$ into $\Isom^+(\H^2)\simeq \PSL(2,\R)$. The representations obtained by this procedure are called \emph{Fuchsian}. % Identifying the universal cover of with the upper half-plane $\mathbb H$  permits to view a Fuchsian representation as a representation in the isometry group of $\mathbb H$, namely $\mathrm{PSL} (2,\mathbb R)$, acting by homographies. 

 We are interested in the following question: given a representation $\rho$ of the fundamental group of a closed oriented surface of negative Euler characteristic into the group of isometries of a complete Riemannian manifold, is there a Fuchsian representation of $\Gamma$ that dominates $\rho$? When $\rho$ takes values into $\PSL(2,\R)$, the question has been raised by Kassel in her work on closed anti-de Sitter $3$-manifolds (cf. subsection \ref{ss:AdS}).

Here we prove:

\begin{theo}\label{t:main thm}
Let $S$ be a closed oriented surface of negative Euler characteristic, $\Gamma$ its fundamental group, $(M,g)$ a smooth, simply connected, complete Riemannian manifold and $\rho : \Gamma \rightarrow \Isom(M,g)$ a representation. Assume that the sectional curvature of $(M,g)$ is bounded above by $-1$. Then there exists a Fuchsian representation $j$ of $\Gamma$ that dominates $\rho$. Moreover, the domination can be made strict unless $\rho$ stabilizes a totally geodesic plane $\H \subset M$ of curvature $-1$, in restriction to which $\rho$ is Fuchsian. 
\end{theo}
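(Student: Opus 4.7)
The plan is to use equivariant harmonic maps together with a variational argument over Teichm\"uller space, concluded by a Bochner-type computation. For each Fuchsian representation $j$ of $\Gamma$, the Eells--Sampson--Corlette theorem produces (under a reductivity hypothesis on $\rho$, with the elementary degenerate cases handled by direct constructions) a unique $(j,\rho)$-equivariant harmonic map $u_j : \H^2 \to M$. The goal is to choose $j$ so that $u_j$ is $1$-Lipschitz, which then gives the domination via descent to $S$.

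The variational step is to pick $j^* \in \Teich(S)$ minimizing the energy $E(j)$ of $u_j$. Properness of $E$ on Teichm\"uller space (when $\rho$ is sufficiently non-degenerate) follows from a standard curve-pinching estimate: if a simple closed curve is pinched in $[j_n]$ while $\ell_\rho(\gamma)$ stays bounded below, then the Dirichlet energy over a thin annulus around $\gamma$ diverges. By the classical Wolf--Tromba calculation, the derivative $dE$ at $j$ pairs holomorphic quadratic differentials on $(S,[j])$ with the Hopf differential of $u_j$; therefore at the minimizer $j^*$ the Hopf differential of $u := u_{j^*}$ vanishes, i.e. $u$ is weakly conformal. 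Such a $u$ is a (possibly branched) minimal immersion.

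Write $u^* g = h \cdot \sigma$ on the regular locus, where $\sigma$ is the hyperbolic metric corresponding to $j^*$ and $h$ is the squared dilation of $u$. Because $u$ is a minimal immersion and $K_M \leq -1$, the Gauss equation gives $K_{u^*g} \leq -1$, and expanding via the conformal change formula,
\[
K_{u^*g} \;=\; \frac{1}{h}\!\left(-1 - \tfrac{1}{2} \Delta_\sigma \log h\right),
\]
this rearranges to the elliptic inequality
\[
\Delta_\sigma \log h \;\geq\; 2(h-1)
\]
on $\{h>0\}$. At a maximum of $h$ on the compact surface $S$ the left-hand side is non-positive, forcing $h \leq 1$ everywhere. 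Hence $u$ is $1$-Lipschitz and $j^*$ dominates $\rho$.

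For the strict/rigidity part, suppose $h(x_0) = 1$ at some point. The strong maximum principle applied to the inequality above forces $h \equiv 1$, so $u$ is a local isometric immersion, and the equality case of the Gauss estimate forces its second fundamental form to vanish and the ambient sectional curvature along the image to be exactly $-1$; the image is therefore a complete totally geodesic plane of curvature $-1$ preserved by $\rho(\Gamma)$, on which $\rho$ acts through the Fuchsian representation $j^*$ (via the isometry $u$). The main technical obstacle is the first step: ensuring existence of a minimizer, which demands both existence of the harmonic maps for possibly indiscrete or non-faithful $\rho$ and properness of $E$ on $\Teich(S)$; degenerate representations must be treated separately so that the variational setup is robust.
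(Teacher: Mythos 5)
Your endgame is sound, but the opening move has a genuine gap: the existence of the energy minimizer $j^*$ in $\Teich(S)$. Your properness argument only controls degenerating sequences $[j_n]$ along which the pinched simple closed curve $\gamma$ satisfies $\ell_\rho(\gamma)>0$; if $\rho(\gamma)$ is elliptic, parabolic or trivial for some essential simple closed curve $\gamma$, the collar estimate gives nothing, test maps of bounded energy can be built on the pinching annulus, and the infimum of $E$ is in general attained only on the boundary of Teichm\"uller space (at a noded surface). Such representations are not ``degenerate'' in any mild sense --- they need not fix a point of $\partial_\infty M$, need not be reducible, and cannot be disposed of by the elementary constructions you allude to. Worse, they are precisely the representations the theorem is aimed at: for the anti-de Sitter application one needs non-Fuchsian $\rho:\Gamma\to\PSL(2,\R)$, and (as the March\'e--Wolff result quoted in this paper shows in genus $2$) every such representation sends some simple closed curve to a non-hyperbolic element. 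So the variational step fails exactly on the main case of interest, and without a critical point of $E$ you never obtain the conformal (minimal) harmonic map on which the whole Gauss-equation argument rests.

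The paper's proof is designed to avoid this minimization entirely. It fixes an \emph{arbitrary} hyperbolic metric $g_0$, takes the $(\Gamma,\rho)$-equivariant harmonic map $f$ (Labourie's theorem, with the fixed-point-at-infinity case handled separately via Busemann functions), and then, instead of moving the domain conformal structure until the Hopf differential $\Phi$ vanishes, uses the Hitchin--Wolf parametrization to produce a hyperbolic metric $g_1=\alpha_1 g_0+\Phi+\bar\Phi$ with the \emph{same} Hopf differential. The comparison $f^*g< g_1$ is then proved by writing both metrics in terms of the functions $H_i,L_i$, using the Bochner identities they satisfy together with the curvature bound $\kappa(f^*g)\le -1$ (which, as in your Gauss-equation step, follows from harmonicity alone, not conformality), and applying the maximum principle to $\log(H_2/H_1)$. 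Your computation of $K_{u^*g}$, the inequality $\Delta_\sigma\log h\ge 2(h-1)$, and the rigidity analysis in the equality case are all correct and are essentially the conformal specialization of the paper's Lemmas; but as written your proof establishes the theorem only for representations whose simple marked length spectrum is bounded away from zero, which is a much weaker statement.
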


Note that the strict domination cannot occur if $\rho$ is Fuchsian in restriction to an invariant copy of $\mathbb H^2$. Indeed, composing the equivariant map $f$ with the orthogonal projection on $\H$ would provide a contracting map from the original hyperbolic surface to the new hyperbolic surface $\rho(\Gamma) \backslash \mathbb H$. This would contradict the fact that both surfaces have the same volume $-2\pi \chi(S)$.

One can think of several generalizations of Theorem \ref{t:main thm}. In section \ref{s:orbifold}, we will extend it to representations of lattices in $\text{PSL}(2,\mathbb R)$ with torsion. 
It is likely that our approach can be generalized to the case where $M$ is any $\text{CAT}(-1)$ space, see subsection \ref{ss: strategy}. 

Hereafter we discuss some applications of our result, and in the last section some possible developments to higher rank representations.

\subsection{Contraction of the length spectrum and universality of the Bers constant} 
Recall that the \textit{translation length} of an isometry $\varphi$ of a Riemannian manifold $(M,g)$ is defined by 
\begin{equation} \label{eq:translation length}  l(\phi) := \inf _{x\in M} d(x,\phi(x))~, \end{equation}
where $d(\cdot, \cdot) $ is the distance induced by $g$. If $\rho : \Gamma \rightarrow \text{Isom} (M,g)$ is a representation, the \textit{length spectrum} of $\rho $ is the map $L_\rho : \gamma \in \Gamma \mapsto l (\rho (\gamma) ) \in \mathbb R^+$. 

The length spectrum of a representation contains a lot of information, and sometimes determines completely the representation. For instance, Zariski dense representations in $\mathrm{PSO}(n,1)$ are determined by their length spectrum. Another illustration  of this phenomenon is a famous result of Otal \cite{Otal} stating that one can recover a negatively curved metric on a closed surface by the knowledge of the length spectrum of the action of its fundamental group on the universal cover. 

Given two representations $\rho_i : \Gamma \rightarrow \text{Isom} (M_i, g_i)$, we say that the length spectrum of $\rho_1$ dominates the one of $\rho_2$ if $L _ {\rho _2} \leq L_{ \rho_1}$. Similarly, the domination is called \textit{strict} if there is a positive constant $\lambda <1$ such that $L_{\rho_2} \leq \lambda L_{\rho_1}$. 

It is clear that if $\rho_1$ (strictly) dominates $\rho_2$, then the length spectrum of $\rho_1$ (strictly) dominates the length spectrum of $\rho_2$. The converse has been proven in several cases by Gu\'eritaud--Kassel \cite{GK}. They pointed out to us that their method would work in our setting. The following corollary can therefore be seen as a reformulation of Theorem \ref{t:main thm}.

\begin{coro} Let $\Gamma$ be the fundamental group of a closed oriented surface of negative Euler characteristic, and $\rho$ a representation of $\Gamma$ into the isometry group of a smooth simply connected complete Riemannian manifold of curvature bounded above by $-1$. Then there exists a Fuchsian representation of $\Gamma$ whose length spectrum dominates $L_\rho$. Moreover, the domination can be made strict unless there exists a totally geodesic copy of $\mathbb H^2$ preserved by $\rho$, in restriction to which $\rho$ is Fuchsian. 
\end{coro}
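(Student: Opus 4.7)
The plan is to deduce this corollary as a direct consequence of Theorem A. Indeed, the length spectrum inequality is a soft consequence of the existence of a Lipschitz equivariant map, so nothing substantial remains once Theorem A is granted. In particular, the converse direction of Guéritaud--Kassel alluded to in the paper is not needed here.

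First, I apply Theorem A: there exists a Fuchsian representation $j : \Gamma \to \PSL(2,\R)$ and a $(j,\rho)$-equivariant $\lambda$-Lipschitz map $f : \H^2 \to M$, with $\lambda = 1$ in general and $\lambda < 1$ unless $\rho$ preserves a totally geodesic copy of $\H^2 \subset M$ of curvature $-1$ on which it is Fuchsian. Fix any $\gamma \in \Gamma$ and any $x \in \H^2$. Using the equivariance \eqref{eq:equivariance} and the $\lambda$-Lipschitz property, I get
\[
d_M\bigl(f(x),\rho(\gamma)\cdot f(x)\bigr) \;=\; d_M\bigl(f(x), f(j(\gamma)\cdot x)\bigr) \;\leq\; \lambda\, d_{\H^2}\bigl(x, j(\gamma)\cdot x\bigr).
\]
Since $f(x)$ is a point of $M$, the definition \eqref{eq:translation length} of translation length as an infimum over all points of $M$ yields
\[
l\bigl(\rho(\gamma)\bigr) \;\leq\; d_M\bigl(f(x),\rho(\gamma)\cdot f(x)\bigr) \;\leq\; \lambda\, d_{\H^2}\bigl(x, j(\gamma)\cdot x\bigr).
\]
Taking the infimum over $x \in \H^2$ on the right-hand side gives $l(\rho(\gamma)) \leq \lambda\, l(j(\gamma))$, and since this holds for every $\gamma \in \Gamma$, I conclude $L_\rho \leq \lambda L_j$.

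The dichotomy in the statement is then transported from Theorem A with no further work: in the generic case I obtain $\lambda < 1$, hence strict domination of length spectra; in the exceptional case ($\rho$ Fuchsian on an invariant totally geodesic $\H \subset M$ of curvature $-1$), Theorem A still provides a $1$-Lipschitz equivariant map, so I obtain $L_\rho \leq L_j$. Since no genuine obstacle appears, the whole weight of the argument lies in Theorem A; the present corollary is just the elementary observation that equivariant Lipschitz maps contract translation lengths by the same factor.
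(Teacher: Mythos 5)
Your proposal is correct and follows exactly the route the paper takes: the authors simply invoke the (stated as clear) fact that a $\lambda$-Lipschitz $(j,\rho)$-equivariant map forces $L_\rho\leq\lambda L_j$, and your computation is a faithful spelling-out of that observation, including the correct remark that the Gu\'eritaud--Kassel converse is not needed for this direction.
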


Recall that the Bers constant in genus $g$ is the smallest constant $B_g$ such that for any hyperbolic metric on a closed surface $S$ of genus $g$, there is a decomposition of $S$ into pairs of pants such that all the geodesics of this decomposition have length at most $B_g$. The Bers constant in genus $2$ has been explicitly computed by Gendulphe \cite{Gendulphe}.

Since the length spectrum of a representation of a surface group into the isometry group of any simply connected Riemannian manifold of sectional curvature $\leq -1$ can be dominated by the length spectrum of a Fuchsian representation, the Bers constant naturally extends to those representations, and we get the following corollary:

\begin{coro}
Let $\Gamma$ be the fundamental group of a closed oriented surface $S$ of genus $g\geq 2$, $M$ a smooth, simply connected, complete Riemannian manifold of curvature bounded above by $-1$ and $\rho$ a representation of $\Gamma$ into $\Isom(M)$. Then there exists a pants decomposition of $S$ for which the image of any curve of the decomposition has translation length  at most $B_g$.
\end{coro}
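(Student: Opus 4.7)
The plan is to deduce this directly from the previous corollary and the definition of the Bers constant. First I would apply the previous corollary to obtain a Fuchsian representation $j : \Gamma \to \PSL(2,\R)$ whose length spectrum dominates $L_\rho$, i.e. $L_\rho(\gamma) \leq L_j(\gamma)$ for every $\gamma \in \Gamma$. The quotient $S_j := j(\Gamma) \backslash \H^2$ is then a hyperbolic surface of genus $g$, whose underlying topological surface is identified with $S$ via the marking $j$.

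Next I would invoke the definition of the Bers constant $B_g$: since $S_j$ is a closed hyperbolic surface of genus $g$, there exists a pants decomposition $\{\gamma_1, \dots, \gamma_{3g-3}\}$ of $S$ such that the $j$-translation length of each $\gamma_i$ is at most $B_g$, i.e. $L_j(\gamma_i) \leq B_g$ for all $i$.

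Combining these two facts, for every curve $\gamma_i$ of the decomposition one has
\begin{equation*}
L_\rho(\gamma_i) \;\leq\; L_j(\gamma_i) \;\leq\; B_g,
\end{equation*}
which is exactly the conclusion. There is essentially no obstacle: the only non-trivial input is the length-spectrum domination of the previous corollary, and the rest is tautological from the definition of $B_g$.
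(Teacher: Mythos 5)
Your proof is correct and is exactly the argument the paper intends: it deduces the corollary from the length-spectrum domination of the preceding corollary combined with the definition of the Bers constant applied to the dominating hyperbolic surface. Nothing further is needed.
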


March\'e and Wolff \cite{MW} recently used this extension of the Bers constant to solve a conjecture of Bowditch in genus $2$. They proved that, given a closed oriented surface $S$ of genus $2$ and a non-Fuchsian representation $\rho: \pi_1(S) \to \PSL(2,\R)$, there always exists a simple closed curve in $S$ whose image by $\rho$ is not hyperbolic.

\subsection{Rigidity results}
Theorem \ref{t:main thm} asserts that Fuchsian representations are ``maximal'' in some strong sense that implies various weaker rigidity results.  To illustrate this, let us recall the definition of the critical exponent of a representation.

\begin{defi}
Let $\Gamma$ be a surface group, $(M,d)$ a metric space and $\rho: \Gamma \to \Isom(M)$ a representation.
The \emph{critical exponent} of $\rho$ is the smallest number $\delta(\rho)$ such that the Poincar\'e series
\[ \sum_{\gamma \in \Gamma} e^{-s\, d(x,\gamma\cdot x)}\]
converges for all $s> \delta(\rho)$. (The convergence does not depend on the choice of the base point $x \in M$).
\end{defi}

When $M$ is a $\mathrm{CAT}(-1)$ space and $\rho$ is convex cocompact, the critical exponent coincides with the Hausdorff dimension of the limit set of $\rho(\Gamma)$ in $\partial_\infty M$ (see for instance \cite{Coornaert}). It is equal to $1$ when $\rho$ is Fuchsian in restriction to a totally geodesic hyperbolic plane of curvature $-1$, and it easily follows from Theorem \ref{t:main thm} that it is greater than $1$ otherwise. As a corollary, we obtain a new proof of the following result:

\begin{coro}
Let $\Gamma$ be a surface group and $\rho$ a convex cocompact representation of $\Gamma$ into the isometry group of some complete simply connected Riemannian space $M$ of sectional curvature $\leq -1$. Then the limit set of $\rho(\Gamma)$ in $\partial_\infty M$ has Hausdorff dimension $\geq 1$, with equality if and only if $\rho$ is Fuchsian in restriction to some stable totally geodesic plane of curvature $-1$.
\end{coro}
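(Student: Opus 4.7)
The plan is to reduce the corollary to controlling the critical exponent $\delta(\rho)$, and then to apply Theorem~\ref{t:main thm}. Since $M$ is a complete, simply connected Riemannian manifold of sectional curvature $\leq -1$, it is a $\mathrm{CAT}(-1)$ space, and Coornaert's theorem \cite{Coornaert}, already invoked in the excerpt, identifies the Hausdorff dimension of the limit set of a convex cocompact group of isometries of such a space with its critical exponent. It therefore suffices to prove that $\delta(\rho) \geq 1$, with equality forcing $\rho$ to be Fuchsian on an invariant totally geodesic plane of curvature $-1$.

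For the inequality, I would apply Theorem~\ref{t:main thm} to obtain a Fuchsian representation $j : \Gamma \to \PSL(2,\R)$ dominating $\rho$, together with a $1$-Lipschitz $(j,\rho)$-equivariant map $f : \H^2 \to M$. Fix a basepoint $x \in \H^2$. Equivariance combined with the Lipschitz bound yields, for every $\gamma \in \Gamma$,
\[ d_M(f(x), \rho(\gamma) f(x)) = d_M(f(x), f(j(\gamma) x)) \leq d_{\H^2}(x, j(\gamma) x). \]
Hence the Poincar\'e series of $\rho$ based at $f(x)$ dominates termwise the Poincar\'e series of $j$ based at $x$, and since $j$ is cocompact Fuchsian we have $\delta(j) = 1$; whence $\delta(\rho) \geq 1$, i.e.\ $\dim_H \Lambda_{\rho(\Gamma)} \geq 1$.

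For the equality case, one direction is immediate: if $\rho$ stabilizes a totally geodesic plane $\H \subset M$ of curvature $-1$ on which it is Fuchsian, then the limit set is the round circle $\partial_\infty \H$, of Hausdorff dimension $1$. Conversely, suppose $\rho$ is not of this form. Theorem~\ref{t:main thm} then provides a strict domination, i.e.\ a $\lambda$-Lipschitz equivariant map $f$ for some $\lambda < 1$. The same estimate upgrades to
\[ d_M(f(x), \rho(\gamma) f(x)) \leq \lambda \, d_{\H^2}(x, j(\gamma) x), \]
so the Poincar\'e series of $\rho$ at parameter $s$ dominates termwise the Poincar\'e series of $j$ at parameter $s\lambda$; the latter diverges as soon as $s\lambda < 1$, forcing $\delta(\rho) \geq 1/\lambda > 1$ and contradicting equality.

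The substantive content is entirely packaged into Theorem~\ref{t:main thm}: once its strict/non-strict dichotomy is available, the critical exponent argument above is routine, with Coornaert's theorem providing the only external input. There is no real obstacle to overcome beyond the observation that a Lipschitz constant $\lambda<1$ on the equivariant map translates directly into a factor $1/\lambda$ on the critical exponent via termwise comparison of Poincar\'e series.
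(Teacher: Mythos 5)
Your proposal is correct and follows essentially the same route as the paper: the authors also identify the Hausdorff dimension of the limit set with the critical exponent via Coornaert's theorem and then deduce $\delta(\rho)\geq 1$ (strictly, in the non-Fuchsian case) from the (strict) domination of Theorem~\ref{t:main thm}. Your termwise comparison of Poincar\'e series is exactly the ``easily follows'' step the paper leaves implicit, including the observation that a $\lambda$-Lipschitz equivariant map gives $\delta(\rho)\geq 1/\lambda$.
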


When $M$ is the hyperbolic space $\H^3$, this is a famous theorem of Bowen \cite{Bowen}. It was conjectured by Bourdon \cite{Bourdon} and proved by Bonk and Kleiner \cite{BK} when $M$ is any $\mathrm{CAT}(-1)$ space.  A generalization of our approach in the context of general $\text{CAT}(-1)$-spaces, which seems likely (see subsection \ref{ss: strategy}), would then provide an alternative proof of Bourdon's conjecture. \\

Toledo introduced in \cite{Toledo} another notion of maximality for representations of a surface group into $\PSU(n,1)$. Recall that the symmetric space of $\PSU(n,1)$ is the \emph{complex hyperbolic space} $\H_\C^n$. It carries a $\PSU(n,1)$-invariant K\"ahler metric that can be normalized to have sectional curvature between $-4$ and $-1$. Our theorem thus applies for representations into $\PSU(n,1)$.

Let $S$ be a closed oriented surface of genus $g \geq 2$ and $\Gamma$ its fundamental group. Let $\rho$ be a representation of $\Gamma$ into $\PSU(n,1)$, and $f$ be any $(\Gamma,\rho)$-equivariant map from $\tilde{S}$ to $\H^n_{\C}$. Then the number
$$\tau (\rho) = \frac{2}{\pi} \int_S f^* \omega$$
is an integer independent of $f$, called the \emph{Toledo invariant}. Toledo proved \cite{Toledo, Toledo'} that this invariant is bounded between $2-2g$ and $2g-2$ and is extremal if and only if the representation is Fuchsian in restriction to some stable totally geodesic hyperbolic plane in $\H_\C^n$. Representations with extremal Toledo invariant are called \emph{maximal}.

Surprisingly, maximal representations in that sense do not have maximal length spectrum. Indeed, Toledo's maximal representations are Fuchsian in restriction to some holomorphic copy of $\H^2$, which has curvature $-4$. In constrast, representations that cannot be strictly dominated preserve a copy of $\H^2$ of curvature $-1$ which is Lagrangian with respect to $\omega$. Therefore, those representations have a Toledo invariant equal to $0$. We are thankful to Pr. Toledo for bringing this subtlety to our attention.

\subsection{Closed anti-de Sitter  manifolds of dimension 3} \label{ss:AdS}
Anti-de Sitter (AdS) manifolds are Lorentz manifolds of constant negative sectional curvature. In dimension 3, they are locally modelled on $\PSL(2,\R)$ equipped with its Killing metric, whose isometry group is (up to finite index) $\PSL(2,\R) \times \PSL(2,\R)$ acting on $\PSL(2,\R)$ by
$$(\gamma_1, \gamma_2) \cdot x = \gamma_1 x \gamma_2^{-1}~.$$

Klingler \cite{Klingler}, generalizing a result of Carri\`ere \cite{Carriere}, proved that closed Lorentz manifolds of constant curvature are always geodesically complete. A consequence is that a closed AdS manifold of dimension 3 is a quotient of the universal cover $\tilde{\PSL(2,\R)}$ by a subgroup of $\tilde{\PSL(2,\R)} \times \tilde{\PSL(2,\R)}$ acting freely, properly discontinuously and cocompactly on $\tilde{\PSL(2,\R)}$.

Those quotients have been described by works of Goldman \cite{Goldman}, Kulkarni and Raymond \cite{KR}, Salein \cite{Salein}, and Kassel \cite{Kassel'}. Kulkarni and Raymond proved that, up to a finite cover and a finite quotient, closed anti-de Sitter manifolds are isometric to $\Gamma_{j, \rho} \backslash \PSL(2,\R)$, where $\Gamma$ is a surface group, $j, \rho$ two representations of $\Gamma$ into $\PSL(2,\R)$, $j$ Fuchsian, and $\Gamma_{j,\rho}$ is the image of $\Gamma$ into $\PSL(2,\R) \times \PSL(2,\R)$ by the embedding
$$\gamma \mapsto (j(\gamma), \rho(\gamma))~.$$

A pair $(j,\rho)$ of representations such that $\Gamma_{j,\rho}$ acts properly discontinuously on $\PSL(2,\R)$ is called an \emph{admissible pair}. Salein noticed that a sufficient condition for $(j,\rho)$ to be admissible is that $j$ strictly dominates $\rho$.  As a consequence, he obtains the existence of admissible pairs $(j,\rho)$ with $\rho$ of any non-extremal Euler class. Lastly, Kassel \cite[Chapter 5]{Kassel'} proved that Salein's sufficient condition is also necessary. 

Describing the space of closed anti-de Sitter 3-manifolds (up to finite coverings) thus reduces to describing the set of triples $(\Gamma, j, \rho)$, where $\Gamma$ is a surface group, $j$ a Fuchsian representation of $\Gamma$ into $\PSL(2,\R)$ and $\rho$ another representation that is strictly dominated by $j$. A natural question is whether any non-Fuchsian representation $\rho$ can appear in an admissible pair. It is answered positively by Theorem \ref{t:main thm}. We thus get the following corollary:

\begin{coro}
Let $S$ be a closed oriented surface of negative Euler characteristic, $\Gamma$ its fundamental group, and $\rho : \Gamma \to \PSL(2,\R)$ a representation of non-extremal Euler class. Then there exists a Fuchsian representation $j$ of $\Gamma$ such that $\Gamma_{j,\rho}$ acts properly discontinuously and cocompactly on $\PSL(2,\R)$.
\end{coro}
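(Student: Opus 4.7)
The plan is to deduce the corollary directly from Theorem \ref{t:main thm} combined with the Salein--Kassel criterion for admissibility recalled above. Essentially no new ideas are required; the work consists in checking that the hypotheses of Theorem \ref{t:main thm} are met and that the ``exceptional'' case it allows is ruled out by the non-extremal Euler class assumption.

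First, I would apply Theorem \ref{t:main thm} to $\rho$, viewing $\PSL(2,\R) = \Isom^+(\H^2)$ as a subgroup of the isometry group of the hyperbolic plane. Since $\H^2$ is smooth, simply connected, complete, and of constant sectional curvature equal to $-1$, the hypotheses of Theorem \ref{t:main thm} are satisfied. We thus obtain a Fuchsian representation $j : \Gamma \to \PSL(2,\R)$ that dominates $\rho$.

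Next, I would argue that the domination is automatically strict. According to Theorem \ref{t:main thm}, the only obstruction to strictness is that $\rho$ stabilizes a totally geodesic plane $\H \subset M$ of curvature $-1$ in restriction to which $\rho$ is Fuchsian. Here, however, $M = \H^2$ is itself a $2$-dimensional space of constant curvature $-1$, so the only such plane is $M$ itself, and the exceptional case reduces to $\rho$ being Fuchsian as a representation into $\PSL(2,\R)$. By the Milnor--Wood inequality, every Fuchsian representation has Euler class equal to $\pm \chi(S) = \pm(2g-2)$, which is extremal. The assumption that $\rho$ has non-extremal Euler class therefore rules out this case, and we conclude that $j$ strictly dominates $\rho$.

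Finally, I would invoke the theorem of Salein and Kassel recalled in subsection \ref{ss:AdS}: strict domination of $\rho$ by a Fuchsian representation $j$ is equivalent to the pair $(j,\rho)$ being admissible, i.e.\ $\Gamma_{j,\rho}$ acting properly discontinuously on $\PSL(2,\R)$, and the resulting quotient is then a closed AdS $3$-manifold (cocompactness being part of the Kulkarni--Raymond--Kassel description). This yields the desired conclusion.

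The main (and only) point requiring attention is thus the reduction of the exceptional case of Theorem \ref{t:main thm} to the Fuchsianness of $\rho$ itself, which is immediate since $\dim M = 2$, together with the Milnor--Wood obstruction that forces extremal Euler class on any such $\rho$. Given Theorem \ref{t:main thm} and the Salein--Kassel criterion, there is no substantial obstacle.
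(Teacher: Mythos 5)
Your proposal is correct and follows exactly the route the paper intends: apply Theorem~\ref{t:main thm} to $M=\H^2$, note that the exceptional (non-strict) case would force $\rho$ to be Fuchsian and hence of extremal Euler class, and then invoke Salein's sufficiency criterion for admissibility. This matches the paper's own (implicit) deduction in subsection~\ref{ss:AdS}, so there is nothing to add.
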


This result has been obtained independently and with different methods by Gu\'eritaud, Kassel and Wolff, see \cite{GKW}.

\begin{rmk}
Theorems of Kulkarni--Raymond and Kassel have been generalized by Kassel \cite{Kassel} and Gu\'eritaud--Kassel \cite{GK} to compact quotients of $\PSO(n,1)$ by discrete subgroups of $\PSO(n,1) \times \PSO(n,1)$. Namely, they proved that those quotients have (up to finite index) the form
$$\Gamma_{j,\rho} \backslash \PSO(n,1)$$ 
with $\Gamma$ the fundamental group of a closed hyperbolic $n$-manifold and $j, \rho$ two representations of $\Gamma$ into $\PSO(n,1)$, $j$ discrete and faithful and $\rho$ strictly dominated by $j$. However, when $n\geq 3$, the picture is very different. Indeed, by Mostow's rigidity theorem, there is only one discrete and faithful representation of $\Gamma$ up to conjugacy. Therefore, one cannot change the translation lengths of the elements $j(\gamma)$, and when $H^1(\Gamma, \mathbb R) \neq 0$ it is possible to construct abelian representations $\rho$ that are not dominated by $j$.\\
\end{rmk}

\subsection{Strategy of the proof} \label{ss: strategy}
Our approach shares some similarities with the technique used by Toledo in  \cite{Toledo, Toledo'}. Starting with any hyperbolic metric on $S$, one can consider a $(\Gamma, \rho)$-equivariant map from $\tilde{S}$ to $M$ that is harmonic. (It almost always exists, according to a theorem of Labourie \cite{Labourie}.) Toledo noticed that such a map may not be $1$-Lipschitz, though it contracts volume in average. 

However, it turns out that this harmonic map becomes contracting after suitably changing the hyperbolic metric on $S$. If the harmonic map were an immersion, this could be achieved by uniformizing the metric of $M$ pulled back by the harmonic map. Domination would then follow from Lemma \ref{l:curvature bound} and the well-known Ahlfors--Schwarz--Pick lemma \cite{Ahlfors}.

In our setting, harmonic maps need not be immersions, but our strategy is similar. The new hyperbolic metric on $S$ is constructed using a uniformization theorem due independently to Hitchin \cite{Hitchin} and Wolf \cite{Wolf} (see subsection \ref{ss:Wolf}), and the Ahlfors--Schwarz--Pick lemma is replaced by a maximum principle relying on classical Bochner-type identities for harmonic maps, refining an argument of Sampson, see the proof of \cite[Theorem 13]{Sampson}.

Note that the representation that we obtain this way depends on the choice of the initial hyperbolic metric on $S$. What we construct is actually a map from the Teichm\"uller space of $S$ to the domain of Fuchsian representations strictly dominating $\rho$. In \cite{Tholozan}, the second author proves that this map is a homeomorphism, leading to a topological description of the space of pairs $(j,\rho)$ such that $j$ strictly dominates $\rho$.

It is likely that our approach can be generalized when $M$ is any $\text{CAT}(-1)$-space, using Korevaar--Schoen's analysis of harmonic mappings with non positively curved metric target space (see \cite{KS}), and particularly their definition of the Hopf differential in this context. However, some technical difficulties arise, because for a singular target space one cannot use directly Bochner identities.\\

The next section introduces the main results we need about harmonic maps, and we prove Theorem \ref{t:main thm} in section \ref{s:proof}. In section \ref{s:orbifold} we note that our proof naturally extends to representations of lattices of $\PSL(2,\R)$ that are not necessarily torsion-free.

\subsection{Acknowledgments}

We thank Fran\c cois Gu\'eritaud, Fanny Kassel and Maxime Wolff for interesting discussions about our respective points of view and Olivier Guichard, Michael Wolf and Domingo Toledo for useful comments that improved a first version of this work. We also thank Sorin Dumitrescu for his interest in this subject and his important role in the collaboration of the authors. The first author thanks the university of Nice-Sophia Antipolis for the invitation and the nice working conditions. 

\section{Harmonic maps between Riemannian manifolds}

A harmonic map between two Riemannian manifolds is a critical point of the \emph{energy functional}, which associates to a map $f$ the mean value of the square norm of its differential. %The theory of harmonic maps englobes several objects such as geodesics or minimal surfaces. 
Though most of the fundamental results we state here are true in a general setting, we will restrict ourselves to the theory of harmonic maps with the source being a surface. In this case, the energy of a map only depends on the conformal class of the metric on the source. More details about the theory of harmonic maps and its relation to Teichm\"uller theory can be found in the survey of Daskalopoulos and Wentworth \cite{DW}.

\subsection{Energy of a map and harmonicity}
Let $(S,g_0)$ be a Riemann surface, $(M,g)$ a Riemannian manifold, and $f: S \to M$ a smooth map. One can measure how ``stretchy'' the map $f$ is by comparing $f^*g$ with $g_0$. 

\begin{defi}
Let $g_1$ be a non-negative symmetric $2$-form on $S$. Let $A$ be the field of symmetric endomorphisms of $(TS, g_0)$ such that $g_1 = g_0(\cdot, A \cdot)$. The \emph{energy density} of $g_1$ (with respect to the Riemann structure $g_0$) is the function
$$x \mapsto e_{g_0}(g_1)(x) = \frac{1}{2}\Tr(A_x)~.$$
Let $f$ be a map from $(S,g_0)$ to a Riemannian manifold $(M,g)$. The \emph{energy density} of $f$ is the function
$$x \mapsto e_{g_0}(f)(x) = e_{g_0}(f^*g)(x)~.$$
\end{defi}
When there is no confusion on the ambient Riemann structure $g_0$, we will usually omit to index the energy density on $g_0$ and simply write ``$e(f)$''.

\begin{defi}
The \emph{total energy} of $f$ is the integral of the energy density:
$$E(f) = \int_S e(f)(x) \Vol_0(x)~,$$
where $\Vol_0$ is the volume form induced on $S$ by the metric $g_0$.
\end{defi}

Consider now a representation $\rho: \Gamma = \pi_1(S)\to \Isom(M,g)$, and $f: \tilde{S}\to M$ a $(\Gamma,\rho)$-equivariant map. Since $\rho(\Gamma)$ acts on $M$ by isometries, the symmetric $2$-form $f^*g$ on $\tilde{S}$ is preserved by the action of the fundamental group, and so is the energy density $e_{\tilde{g_0}}(f)$. We will denote $e_{g_0}(f)$ the induced function on $S$. Then we similarly call the integral of $e_{g_0}(f)$ against $\Vol_0$ on $S$ the total energy of $f$ and denote it $E(f)$.\\

From now on we assume $S$ is closed. Suppose $f$ is a map from $S$ to $M$ that minimizes the energy functional among all smooth maps homotopic to $f$. Then $f$ must verify a certain partial differential equation that can be expressed as the vanishing of a differential operator called the \textit{tension field} of the map. 

\begin{defi}
The \emph{second fundamental form} of $f$ is the section of $\text{Sym}^2 T^* S \otimes f^* TM $  defined by
\begin{equation}\label{eq:second fundamental form of f} II ^f (X,Y) := \nabla^f _{ X} df( Y)  - df ( \nabla^{g_0} _X Y), \end{equation}
where $\nabla^f$ is the pull-back on $f^* TM$ of the Levi-Civita connexion $\nabla ^g$. 

The \emph{tension field} of $f$ is  
\begin{equation} \tau (f) := \Tr_{g_0}\ II^f = II ^f (e_1,e_1) + II^f (e_2,e_2)  \end{equation}
where $(e_1,e_2)$ is any orthonormal basis of $(TS, g_0)$. 

The map $f$ is \emph{harmonic} if the tension field $\tau(f)$ vanishes everywhere.
\end{defi}

\begin{prop}
If $f: S \to M$ minimizes the energy functional among all maps homotopic to $f$, then $f$ is harmonic.
\end{prop}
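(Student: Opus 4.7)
The plan is to derive the first variation formula for the energy functional and then read off that its Euler–Lagrange equation is exactly $\tau(f)=0$.

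First I would pick an arbitrary smooth variation $(f_t)_{t\in(-\epsilon,\epsilon)}$ of $f$ with $f_0=f$, and let $V=\partial_t f_t|_{t=0}$ be its variation vector field, a smooth section of $f^*TM$. Working in a local $g_0$-orthonormal frame $(e_1,e_2)$, the energy density reads
$$e_{g_0}(f_t)=\tfrac12\sum_{i=1}^2 g\bigl(df_t(e_i),df_t(e_i)\bigr).$$
Differentiating at $t=0$ and using that the pull-back connection $\nabla^f$ is compatible with $g$, together with the standard commutation identity $\nabla^{f_t}_{\partial_t}df_t(e_i)=\nabla^{f_t}_{e_i}\partial_t f_t$ (which follows from the torsion-freeness of $\nabla^g$ and from $[\partial_t,e_i]=0$), one obtains the pointwise formula
$$\frac{d}{dt}\bigg|_{t=0} e_{g_0}(f_t)=\sum_{i=1}^2 g\bigl(\nabla^f_{e_i}V,df(e_i)\bigr).$$

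Next I would rewrite this expression as a divergence plus a term proportional to $\tau(f)$. Define the vector field $W$ on $S$ by $g_0(W,X)=g(V,df(X))$. Expanding $e_i\cdot g(V,df(e_i))$ via the Leibniz rule, then substituting the defining formula $II^f(e_i,e_i)=\nabla^f_{e_i}df(e_i)-df(\nabla^{g_0}_{e_i}e_i)$, a routine local computation (carried out most easily in a frame that is geodesic at the base point, so that the correction terms $\nabla^{g_0}_{e_i}e_i$ vanish) yields the pointwise identity
$$\sum_{i=1}^2 g\bigl(\nabla^f_{e_i}V,df(e_i)\bigr)=\operatorname{div}_{g_0}(W)-g\bigl(V,\tau(f)\bigr).$$
Integrating over the closed surface $S$ and applying Stokes' theorem to kill the divergence term then produces the first variation formula
$$\frac{d}{dt}\bigg|_{t=0} E(f_t)=-\int_S g\bigl(V,\tau(f)\bigr)\,\Vol_0.$$

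Finally, if $f$ minimizes the energy in its homotopy class, this integral vanishes for every variation $V$. Since any smooth section of $f^*TM$ is realized as the variation vector field of some smooth variation (for instance $f_t(x)=\exp_{f(x)}(tV(x))$), the fundamental lemma of the calculus of variations forces $\tau(f)\equiv 0$, as required. The only point requiring care — more a piece of book-keeping than a genuine obstacle — is the sign and the identification of the divergence term in the second step; everything there ultimately rests on the symmetry of the Hessian of $f$ and the torsion-freeness of $\nabla^{g_0}$. The same argument carries over verbatim to the equivariant setting relevant to Theorem~\ref{t:main thm} by taking $\Gamma$-equivariant variations on $\tilde S$, which descend to sections of $f^*TM$ over the compact quotient $S$.
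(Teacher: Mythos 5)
Your argument is correct: it is the standard first variation computation, with the commutation identity $\nabla^{F}_{\partial_t}dF(e_i)=\nabla^{F}_{e_i}dF(\partial_t)$ justified properly by torsion-freeness, the divergence term handled in a frame geodesic at the base point, and the conclusion drawn via the exponential variation $f_t(x)=\exp_{f(x)}(tV(x))$. The paper itself states this proposition as a classical fact and gives no proof, so there is nothing to compare against; your write-up supplies exactly the expected Euler--Lagrange derivation, including the remark that equivariant variations on $\tilde S$ descend to the compact quotient, which is the point the paper uses immediately afterwards.
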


For the same reason, if a $(\Gamma,\rho)$-equivariant map from $\tilde{S}$ to $M$ minimizes the energy functional among all $(\Gamma,\rho)$-equivariant maps, then it is harmonic.
Note that the total energy of $f$ depends on the metric $g$ on $M$, but only on the conformal class of $g_0$ on $S$. Indeed, if we multiply $g_0$ by some positive function $\sigma$, the energy density of $f$ is divided by $\sigma$, but the volume is multiplied by $\sigma$, and therefore the total energy is preserved. From this, one can deduce that harmonicity only depends on the conformal class of $g_0$. This is specific to the case where $S$ has dimension $2$.

\subsection{Existence results}

Existence results for harmonic maps date back to the seminal work of Eells--Sampson \cite{ES} where they deal with maps between closed manifolds. Here we will need an analogous result for equivariant maps.

Recall that a simply connected Riemannian manifold $M$ with sectional curvature $\leq -1$ is \emph{Gromov hyperbolic}, see \cite{CDP, GD}. One can thus define its \emph{boundary} $\partial_\infty M$ as the space of geodesic rays, where two such rays are identified when they remain at bounded distance. Any isometry of $M$ induces a transformation of $\partial_\infty M$. Therefore, a representation $\rho : \Gamma \to \Isom(M)$ induces an action of $\Gamma$ on $\partial_\infty M$.

\begin{CiteThm}[Labourie]
Let $S$ be a closed  Riemann surface, $\Gamma$ its fundamental group, $(M,g)$ a complete simply connected Riemannian manifold of sectional curvature $\leq -1$, and $\rho$ a representation of $\Gamma$ into $\Isom (M,g)$. If $\rho$ does not fix a point in the boundary of $M$, or if $\rho$ fixes a geodesic in $M$, then there exists a $\rho$-equivariant harmonic map from $\tilde{S}$ to $M$. If $\rho$ does not fix a point in the boundary, this map is unique.\\
\end{CiteThm}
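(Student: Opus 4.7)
The plan is to obtain the harmonic representative via the direct method, minimizing the energy functional on the space of $\rho$-equivariant $W^{1,2}$-maps from $\tilde S$ to $M$. Such maps exist because $M$ is contractible: fix a compact fundamental domain $D \subset \tilde S$ for $\Gamma$, take any smooth map on $D$, and extend it equivariantly. The energy density of an equivariant map is $\Gamma$-invariant and descends to $S$, so $E$ becomes a well-defined functional bounded below; I then take a minimizing sequence $(f_n)$.

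The heart of the proof, and the step I expect to be the main obstacle, is to show that under the hypothesis that $\rho$ has no fixed point on $\partial_\infty M$ the sequence $(f_n)$ is precompact. The energy bound controls the derivatives $df_n$ in $L^2$, and standard elliptic regularity for harmonic maps into a non-positively curved target gives uniform $C^0$ control of the oscillation of each $f_n$ on compact sets — this is most conveniently achieved by replacing $f_n$ by a harmonic map of no greater energy via the Eells--Sampson heat flow, whose long-time existence under curvature $\leq 0$ is classical. The remaining obstruction is that the base values $y_n := f_n(x_0)$ might escape to infinity in $M$. If they did, Gromov hyperbolicity of $M$ would allow me to extract a subsequence converging to some $\xi \in \partial_\infty M$; the uniform displacement bounds $d(\rho(\gamma) y_n, y_n) = d(f_n(\gamma \cdot x_0), f_n(x_0))$ coming from equivariance and the oscillation control would then force $\rho(\gamma)\xi = \xi$ for every $\gamma$ in a generating set of $\Gamma$, contradicting the hypothesis. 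Once this alternative is ruled out, lower semicontinuity of $E$ produces a smooth harmonic $\rho$-equivariant limit.

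For uniqueness when $\rho$ has no boundary fixed point, I would apply a Bochner-type argument due to Hartman: if $f_1, f_2$ are two $\rho$-equivariant harmonic maps, then $\varphi(x) := d(f_1(x), f_2(x))^2$ satisfies $\Delta \varphi \geq 0$ on $\tilde S$ as a direct consequence of the curvature upper bound $\leq -1 < 0$. Being $\Gamma$-invariant, $\varphi$ descends to a subharmonic function on the compact surface $S$, hence is constant by the maximum principle. The strict negativity of the curvature then forces $\varphi \equiv 0$: indeed, at any point where $f_1(x) \neq f_2(x)$ the Bochner inequality is strict, so $\Delta \varphi > 0$ in a neighborhood, contradicting constancy. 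Therefore $f_1 = f_2$.

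It remains to treat the case where $\rho$ preserves a complete geodesic $c \subset M$, parametrized isometrically by $\R$. The restriction of $\rho$ to $c$ is then a translation action given by a homomorphism $\tau : \Gamma \to \R$, and a $\rho$-equivariant harmonic map into $c$ amounts to a harmonic function $h : \tilde S \to \R$ satisfying $h(\gamma \cdot x) - h(x) = \tau(\gamma)$. Such $h$ is produced by classical Hodge theory, realizing $\tau \in H^1(S, \R)$ by a harmonic $1$-form on $S$ and integrating its pullback on $\tilde S$. Uniqueness is not expected in this case, as translating $h$ by any real constant provides a continuous family of harmonic equivariant maps.
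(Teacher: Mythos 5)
The paper does not actually prove this statement: it is quoted verbatim as Labourie's theorem, with a remark attributing earlier versions to Donaldson (target $\H^3$) and Corlette (nonpositively curved symmetric spaces). So there is no internal proof to compare with; what you have written is a reconstruction of the standard argument from that literature, and its architecture is the correct one: minimize the equivariant energy, upgrade the energy bound to uniform interior Lipschitz bounds, and observe that if the base points $y_n=f_n(x_0)$ escape to infinity they subconverge to some $\xi\in\partial_\infty M$ which is then fixed by $\rho(\Gamma)$ because the displacements $d(\rho(\gamma)y_n,y_n)$ stay bounded on a generating set. The reduction of the invariant-geodesic case to a harmonic $1$-form is also exactly what the authors themselves use in Section 2.1. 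Two caveats on the existence part: the heat flow does not reach a harmonic map in finite time, so you should either flow for a fixed positive time or use local harmonic (Dirichlet) replacement to extract the interior gradient estimate from the Eells--Sampson Bochner formula; and if some $\rho(\gamma)$ reverses the invariant geodesic, the induced action is by a homomorphism into $\Isom(\R)$ rather than into $\R$, so the harmonic function must be produced with twisted coefficients. Both points are routine.

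The genuine gap is in the uniqueness argument. It is not true that the inequality $\Delta\varphi\geq 0$ for $\varphi=d(f_1,f_2)^2$ is strict wherever $f_1(x)\neq f_2(x)$. In the second-variation computation along the geodesics $\sigma_x$ joining $f_1(x)$ to $f_2(x)$, the curvature contribution is $-\sum_i\int_0^1\langle R(\partial_s F,dF(e_i))\partial_s F,dF(e_i)\rangle\,\d s$, which under $K\leq -1$ vanishes precisely when every $dF(e_i)$ is proportional to $\partial_s F$. This degenerate case genuinely occurs: two equivariant harmonic maps into a $\rho$-invariant geodesic differing by a translation along it have $\varphi$ equal to a positive constant and $\Delta\varphi=0$. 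Symptomatically, your uniqueness proof never invokes the hypothesis that $\rho$ fixes no point of $\partial_\infty M$, and would therefore establish uniqueness in the geodesic-preserving case as well --- which your own last paragraph correctly identifies as false. The repair is the analysis of the equality case: constancy of $\varphi>0$ forces $dF(e_i)\parallel\partial_s F$, hence both maps have rank at most one and, being harmonic, take values in a single geodesic $c$ preserved by $\rho(\Gamma)$; if $\rho$ fixes each endpoint of $c$ this contradicts the hypothesis, and if it swaps them the equivariance relation $u(\gamma\cdot x)=\epsilon(\gamma)\,u(x)$ satisfied by the (constant) signed difference $u$ of the two maps along $c$ forces $u\equiv 0$. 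Only with this case analysis is the uniqueness statement actually proved.
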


\begin{rmk}
This statement is only a particular case of Labourie's theorem, that deals with spaces of non-negative curvature (for which the condition on $\rho$ is more difficult to express). Labourie's result was first obtained by Donaldson for maps into $\H^3$ \cite{Donaldson} and by Corlette for maps into non-positively curved Riemannian symmetric spaces \cite{Corlette}.
\end{rmk}

\subsection{Hopf differential and parametrization of Teichm\"uller space}
\subsubsection{Harmonic maps and quadratic differentials}
Consider a surface $S$ with a Riemannian metric $g_0$. This metric induces a conformal structure on $S$ and thus a complex structure. Any complex symmetric $2$-form on $S$ splits into a $(1,1)$-part, a $(2,0)$-part and a $(0,2)$-part. In particular, if $f: S\rightarrow (M,g)$ is a smooth map, $f^*g$ can be written in the form $\alpha g_0 + \Phi + \bar{\Phi}$, where $\Phi$ is a quadratic differential called the \emph{Hopf differential}. The following proposition is classical. A proof can be found in \cite[Section 2.2.3]{DW}. %\note{dans Wentworth, juste une ref vers Schoen pour une preuve dans un cadre plus g\'en\'eral}

\begin{prop}\label{p:quadratic differential}
If $f: (S, [g_0]) \to (M,g)$ is harmonic, then its Hopf differential is holomorphic. This necessary condition is also sufficient when $M$ is a surface.
\end{prop}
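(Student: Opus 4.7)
The plan is a pair of local computations in conformal coordinates. Choose a holomorphic chart $z = x + iy$ on $S$ with $g_0 = \sigma\,|dz|^2$, and extend $g$ complex-bilinearly to $TM\otimes\C$. Setting $\partial_z = \tfrac12(\partial_x - i\partial_y)$ and $\partial_{\bar z} = \tfrac12(\partial_x + i\partial_y)$, the Hopf differential reads $\Phi = \phi\,dz^2$ where $\phi = g(df(\partial_z),df(\partial_z))$. A short calculation using that $\nabla^{g_0}_{\partial_z}\partial_{\bar z} = 0$ in conformal coordinates gives
\[
\tau(f) \;=\; \frac{4}{\sigma}\,\nabla^f_{\partial_{\bar z}}df(\partial_z),
\]
so $f$ is harmonic if and only if $\nabla^f_{\partial_{\bar z}}df(\partial_z) = 0$.

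For the necessity I would differentiate $\phi$ in $\bar z$. Since $\nabla^f$ is metric-compatible and torsion-free and $[\partial_z,\partial_{\bar z}] = 0$, one obtains the Bochner-type identity
\[
\partial_{\bar z}\phi \;=\; 2\,g\!\left(\nabla^f_{\partial_{\bar z}}df(\partial_z),\,df(\partial_z)\right),
\]
whose right-hand side vanishes whenever $f$ is harmonic; thus $\Phi$ is then holomorphic.

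For the converse when $M$ is a surface I would pick a conformal coordinate $w$ on $M$ with $g = \mu\,|dw|^2$ and set $h = w\circ f$. A computation of the Christoffel symbols in the complex frame $(\partial_w,\partial_{\bar w})$ reveals that $\nabla^f_{\partial_{\bar z}}df(\partial_z) = \mu^{-1}\bigl(A\,\partial_w + \overline{A}\,\partial_{\bar w}\bigr)$, where
\[
A \;=\; \mu(h)\,h_{z\bar z} + \mu_w(h)\,h_z\,h_{\bar z},
\]
so harmonicity is equivalent to $A = 0$; meanwhile $\phi = \mu(h)\,h_z\,\overline{h_{\bar z}}$ and the identity of the previous paragraph specializes to $\partial_{\bar z}\phi = \overline{h_{\bar z}}\,A + h_z\,\overline{A}$. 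If $\Phi$ is holomorphic then $\overline{h_{\bar z}}\,A = -h_z\,\overline{A}$, and taking moduli yields $|A|^2\bigl(|h_z|^2 - |h_{\bar z}|^2\bigr) = 0$; since $|h_z|^2 - |h_{\bar z}|^2$ equals the Jacobian $J(f)$, this forces $A = 0$ on the open set $\{J(f)\neq 0\}$.

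The main obstacle is to handle points where $f$ fails to be a local diffeomorphism. Harmonicity extends by continuity of $A$ to the closure of $\{J(f)\neq 0\}$, and the remaining case of an open region on which $df$ has rank at most $1$ is treated by observing that $f$ then factors locally through a one-dimensional map, so that the equation $A = 0$ can be verified by hand. In the applications of the proposition relevant to this paper, in particular the Hitchin--Wolf parametrization of Teichm\"uller space invoked in the next section, the harmonic maps at hand turn out to be diffeomorphisms, so this degeneracy never occurs.
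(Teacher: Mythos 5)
Your necessity argument is the standard one and is correct; note that the paper itself gives no proof of this proposition beyond the citation to Daskalopoulos--Wentworth \S 2.2.3, so the only thing to assess is your argument on its own terms. Your local computation for the converse is also correct as far as it goes: in a conformal chart on the target, holomorphy of $\Phi$ yields $\overline{h_{\bar z}}\,A=-h_z\overline{A}$ and hence $A=0$ wherever $|h_z|^2-|h_{\bar z}|^2\neq 0$.

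The gap is the final step. On an open set where $df$ has rank at most $1$, the equation $A=0$ cannot ``be verified by hand'': it is in general false there. Take $M=\C$ with the flat metric, let $u$ be a (local) real harmonic function with $du\neq 0$, let $\gamma$ be a unit-speed curve in $\C$ that is \emph{not} a straight line, and set $h=\gamma\circ u$. Then $h_z=\gamma'(u)u_z$ and $\overline{h_{\bar z}}=\overline{\gamma'(u)}\,u_z$, so $\phi=|\gamma'(u)|^2u_z^2=u_z^2$ is holomorphic, while $A=h_{z\bar z}=\gamma''(u)\,|u_z|^2\neq 0$, so $h$ is not harmonic. (This even globalizes: take $h=e^{iu}$ on a closed hyperbolic surface, with $du$ a harmonic $1$-form whose periods lie in $2\pi\mathbb{Z}$.) So the converse, as stated without qualification, requires a nondegeneracy hypothesis --- the classical formulation is for diffeomorphisms, or at least for maps whose Jacobian does not vanish on any open set --- and your attempt to dispense with that hypothesis does not work. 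Your closing observation is the correct repair: in this paper the converse is only invoked for the Schoen--Yau harmonic diffeomorphisms underlying the Hitchin--Wolf parametrization, where $J(f)\neq 0$ everywhere and your argument is complete. State the proposition with that hypothesis rather than claiming the degenerate case.
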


Observe that if $f : \tilde{S} \rightarrow (M,g) $ is an equivariant harmonic map with respect to some representation $\rho : \pi_1(S) \rightarrow \text{Isom} (M,g)$, then the Hopf differential of $f$ on $\tilde{S}$ is invariant under the action of the fundamental group of $S$, and thus defines a holomorphic quadratic differential on $S$.

\subsubsection{Harmonic maps and Teichm\"uller space} \label{ss:Wolf}

Let $S$ be a closed surface of negative Euler characteristic. We view here the Teichm\"uller space $\Teich(S)$ as the space of hyperbolic metrics on $S$, where two such metrics are identified when there is an isometry between them which is homotopic to the identity on $S$. Fixing a base point $g_0$ in Teichm\"uller space, and another point $g_1$, one can look at the unique harmonic map from $(S,g_0)$ to $(S,g_1)$ homotopic to the identity. The Schoen--Yau theorem \cite{SY} states that this map is a diffeomorphism. By Proposition \ref{p:quadratic differential}, the Hopf differential of this map is holomorphic with respect to the complex structure given by $g_0$. This constructs a map from the Teichm\"uller space to the vector space of holomorphic quadratic differentials on $(S,[g_0])$. Sampson \cite{Sampson} proved that this map is injective and Hitchin \cite{Hitchin} and Wolf \cite{Wolf} proved that it is surjective. Those results can be summed up in one theorem.

\begin{CiteThm}[Schoen--Yau, Sampson, Hitchin, Wolf]
Let $(S, [g_0])$ be a closed Riemann surface, and $\Phi$ a holomorphic quadratic differential on $S$. Then there is a unique Riemannian metric $g_1$ on $S$ of curvature $-1$ such that
$$g_1 = \alpha g_0 + \Phi + \bar{\Phi}$$
for some positive function $\alpha$.\\
\end{CiteThm}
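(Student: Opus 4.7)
The plan is to translate the statement into a harmonic-map problem, reduce it to a scalar semilinear elliptic PDE on the closed surface $S$, and combine standard elliptic theory with a Sampson--Wolf type maximum principle. The key observation is that if a hyperbolic metric $g_1$ on $S$ has the form $g_1 = \alpha g_0 + \Phi + \bar\Phi$, then the $(2,0)$-part of $g_1$ with respect to $[g_0]$ equals $\Phi$; by Proposition \ref{p:quadratic differential} this makes the identity map $\mathrm{id}:(S,[g_0])\to (S,g_1)$ a harmonic map with Hopf differential $\Phi$. Conversely, to produce $g_1$ it suffices to construct a harmonic diffeomorphism $f:(S,g_0)\to (S,h)$ homotopic to the identity into some hyperbolic surface $(S,h)$ with Hopf differential $\Phi$: by Schoen--Yau such a map is automatically a diffeomorphism, so $g_1 := f^{*}h$ is a hyperbolic metric on $S$, and expanding $f^{*}h$ in local isothermal coordinates gives the claimed form.

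In isothermal coordinates $z$ for $g_0 = \sigma |dz|^2$ and $w$ for $h = \rho |dw|^2$, I introduce the holomorphic and antiholomorphic energy densities $H = \rho|f_z|^2/\sigma$ and $L = \rho|f_{\bar z}|^2/\sigma$. Harmonicity of $f$ combined with $K_h \equiv -1$ gives the Bochner-type identity
$$\Delta_{g_0}\log H = 2H - 2L - 2K_{g_0},$$
while holomorphicity of $\Phi = \rho f_z\,\overline{f_{\bar z}}\,dz^2$ yields the pointwise relation $HL = \|\Phi\|^2_{g_0}$. Taking $g_0$ hyperbolic (only its conformal class enters the conclusion) and setting $u = \tfrac12 \log H$, the problem reduces to solving on the closed surface $S$
$$\Delta_{g_0} u = e^{2u} - \|\Phi\|^2_{g_0}\,e^{-2u} - 1. \qquad (*)$$
The right-hand side is strictly increasing in $u$, which yields uniqueness by the maximum principle: if $w := u_1 - u_2$ had a positive maximum at $x_0$, strict monotonicity would force $\Delta_{g_0} w(x_0) > 0$, contradicting $\Delta_{g_0} w(x_0) \leq 0$. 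For existence, a large constant $u\equiv C$ is a super-solution and $u\equiv -C$ a sub-solution once $e^{2C} > 1 + \max_S \|\Phi\|^2_{g_0}$, so the standard monotone iteration produces a smooth solution $u$ sandwiched between them.

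It remains to verify that $g_1 := (H + \|\Phi\|^2_{g_0}/H)\,g_0 + \Phi + \bar\Phi$ is a genuine Riemannian metric, which amounts to $H > \|\Phi\|_{g_0}$ pointwise (positivity of the Jacobian $H-L$). Away from the zeros of $\Phi$, the auxiliary function $v := \log(H/\|\Phi\|_{g_0})$ satisfies, using $\Delta_{g_0}\log\|\Phi\|_{g_0} = 2K_{g_0} = -2$ and $(*)$,
$$\Delta_{g_0} v = 4\|\Phi\|_{g_0}\sinh v.$$
Since $v \to +\infty$ at $\{\Phi = 0\}$, the infimum of $v$ on $S\setminus\{\Phi=0\}$ is attained at an interior point, at which $\Delta_{g_0} v \geq 0$ forces $\sinh v \geq 0$, hence $v \geq 0$ everywhere. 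The strong maximum principle applied to the linearization $\Delta_{g_0} v - c(x) v = 0$ with $c(x) = 4\|\Phi\|_{g_0}\sinh(v)/v \geq 0$ then upgrades this to $v > 0$: if $v$ vanished at some interior point, it would vanish on the whole connected set $S\setminus\{\Phi=0\}$, contradicting its blow-up at the zeros of $\Phi$. Once $v > 0$ is established, $g_1$ is a Riemannian metric, and a direct computation in isothermal coordinates (equivalently, the harmonic-map interpretation $g_1 = f^{*}h$) shows $K_{g_1} = -1$; uniqueness of $(*)$ translates into uniqueness of $g_1$.

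I expect the last step to be the main obstacle: existence and uniqueness for $(*)$ are routine elliptic theory once the monotonicity structure is identified, whereas ruling out the degenerate limit $H = \|\Phi\|_{g_0}$ — corresponding to a harmonic map folding onto a geodesic — requires the careful maximum-principle argument above and is the substantive analytic content going back to Sampson's Theorem 13.
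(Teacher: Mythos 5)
The paper does not prove this statement: it quotes it as a known theorem and relies on \cite{SY}, \cite{Sampson}, \cite{Hitchin} and \cite{Wolf}. Measured against those sources, your proposal is an essentially correct sketch of the ``direct PDE'' proof, closer in spirit to Hitchin's (whose self-duality equations reduce to exactly your scalar equation $(*)$) than to Wolf's, who instead shows that the map from Teichm\"uller space to $QD(S,[g_0])$ sending $h$ to the Hopf differential of the harmonic map $(S,[g_0])\to(S,h)$ is a proper local diffeomorphism between cells, hence surjective. Your route buys a self-contained elliptic argument with no degree theory, at the price of verifying by hand that the tensor you build is non-degenerate and hyperbolic. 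The non-degeneracy step ($H>L$ via the $\sinh$ equation for $v$ and the blow-up of $v$ at the zeros of $\Phi$) is the standard Sampson--Wolf argument and is correct; note it silently uses that a nonzero holomorphic quadratic differential on a surface of genus $\geq 2$ has zeros, and that the case $\Phi\equiv 0$ must be treated separately (there $(*)$ forces $u\equiv 0$ and $g_1=g_0$).

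Two points need tightening. First, your displayed Bochner identity $\Delta_{g_0}\log H = 2H-2L-2K_{g_0}$ has the wrong sign on the curvature term: it should be $+2K_{g_0}$, consistent with \eqref{eq:Bochner} and \eqref{pde} in the paper; the equation $(*)$ you actually work with is the correct one, so nothing downstream is affected, but the two displays contradict each other as written. Second, the closing step ``a direct computation shows $K_{g_1}=-1$'' is where the construction actually closes the loop and should be spelled out; the parenthetical appeal to $g_1=f^{*}h$ is circular in your direct construction, since no target $h$ has been produced. The clean argument is: the identity map $(S,[g_0])\to(S,g_1)$ is harmonic because its Hopf differential $\Phi$ is holomorphic and the target is a surface (Proposition \ref{p:quadratic differential}), so the general Bochner identity $\Delta_0\log H=-2\kappa(g_1)(H-L)+2K_{g_0}$ holds, and comparison with $(*)$ together with $H-L>0$ forces $\kappa(g_1)=-1$. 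The same observation is what legitimizes your uniqueness step: any hyperbolic metric of the stated form yields, via its larger eigenvalue $H$, a solution of $(*)$, so uniqueness for $(*)$ does give uniqueness of $g_1$. With these repairs the proof is complete; the harmonic-diffeomorphism framing of your first paragraph is a detour that the rest of the argument never uses.
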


\section{Proof of Theorem \ref{t:main thm}} \label{s:proof}

\subsection{Representations fixing a point at infinity} \label{ss:ElementaryCase}
Our strategy for proving that any representation $\rho: \pi_1(S)  \rightarrow \text{Isom} (M,g)$  can be dominated is based on the existence of a $\rho$-equivariant harmonic map from $\tilde{S}$ to $M$. Hence, we must first say something about representations fixing a point in $\partial_\infty M$, for which Labourie's theorem does not hold. Fortunately, in this case, there is a trick which reduces the problem to the (easy) abelian case.

Assume that $\rho(\pi_1(S))$ fixes a point $a\in \partial_\infty M$. Given a geodesic ray $\gamma: [0,\infty) \rightarrow M$ which tends to $a$ at infinity, we classically define the Busemann function as
\begin{equation} \beta_\gamma (x) :=  \lim_{t\rightarrow \infty} \left(  d(\gamma(t), x) -t \right). \end{equation} 
Under the assumption that $(M,g)$ is simply connected with curvature bounded away from $0$, the following properties hold: for every element $\phi\in \text{Isom} (M,g)$ such that $\phi(a) = a$, there exists a real number $m (\phi)$ such that for every $x\in M$,
\begin{equation}\label{eq:morphism} \beta_\gamma ( \phi(x)) = \beta_\gamma (x) + m(\phi) ,\end{equation}
and moreover 
\begin{equation}\label{eq:length vs busemann} l (\phi) = |m(\phi)|,\end{equation}
where we recall that $l(\phi)$ is the translation length defined by \eqref{eq:translation length}. 
We refer to the books \cite{CDP, GD} for details about the theory of Gromov hyperbolic spaces. 

In the case where $\rho: \pi_1(S) \rightarrow \text{Isom} (M,g)$ fixes the point $a\in \partial_\infty M$, the function {$m\circ \rho : \pi_1(S)\rightarrow \mathbb R$} is a morphism by \eqref{eq:morphism}. Let $\gamma'$ be an oriented bi-infinite geodesic in the hyperbolic plane $\H^2$ of constant curvature $-1$ and let $\rho'$ be the representation from $\pi_1(S)$ to $\H^2$ preserving the geodesic $\gamma'$ and acting on $\gamma'$ by translations given by $m\circ \rho$. Equation \eqref{eq:length vs busemann} reduces the problem of dominating $\rho$ to the problem of dominating $\rho'$. But now, $\rho'$ fixes a geodesic, and Labourie's theorem can thus be applied to $\rho'$. We obtain a $(\Gamma,\rho')$-equivariant harmonic map taking values in the geodesic. This harmonic map can be constructed by integrating the harmonic $1$-form having the cohomology class of $m\circ \rho \in H^1(S,\mathbb R)$.

\subsection{Proof of Theorem \ref{t:main thm}}

Let $S$ be a closed oriented surface of negative Euler characteristic, $\Gamma$ its fundamental group, and $\rho$ a representation of $\Gamma$ into $\Isom(M,g)$ that does not fix a point in $\partial_\infty M$. Fix an arbitrary hyperbolic metric $g_0$ on $S$. By Labourie's theorem, we can consider a $(\Gamma,\rho)$-equivariant harmonic map $f : (\tilde{S}, \tilde{g_0}) \rightarrow (M,g)$.  Let $\Phi$ be the holomorphic quadratic differential on $S$ such that 
$$f^* g = \alpha g_0 + \Phi + \bar{\Phi}$$
for some real function $\alpha$.

Wolf's theorem (combined with Schoen--Yau) gives us a hyperbolic metric $g_1$ on $S$ such that $g_1 = \alpha_1 g_0 + \Phi + \bar{\Phi}$ for some positive function $\alpha_1$. Then our main theorem is the direct consequence of the following lemma:

\begin{lem} \label{Contracting}
Either $f$ induces a diffeomorphism from $\tilde{S}$ to a totally geodesic plane $\H^2 \subset M$ of curvature $-1$, or we have
$$f^* g < g_1$$ on all $S$. 
% (recall that $\tilde{f}^*g $ is invariant by the fundamental group of $S$ and induces a symmetric $2$-form on $S$ denote $f^*g$)
%Let $f$ be a harmonic section of $E_\rho$, $\Phi$ the holomorphic quadratic differential such that $f^*g = e(f) g_0 + \Phi + \bar{\Phi}$, and $g_1$ the only hyperbolic metric on $S$ of the form $\alpha g_0 + \Phi + \bar{\Phi}$. Then we have
\end{lem}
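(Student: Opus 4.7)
The plan is to establish the pointwise inequality $\alpha < \alpha_1$---which is equivalent to $f^*g < g_1$, since these two metrics share the Hopf differential $\Phi$, so their difference is $(\alpha_1-\alpha)g_0$---together with a characterization of the borderline case $\alpha \equiv \alpha_1$ via the Gauss equation. Alongside $f$, consider the harmonic diffeomorphism $h : (S, g_0) \to (S, g_1)$ furnished by Schoen--Yau, whose Hopf differential is also $\Phi$. Define the holomorphic and antiholomorphic energies of $f$ by $\mathcal H(f)+\mathcal L(f)=\alpha$, $\mathcal H(f)\mathcal L(f)=|\Phi|^2_{g_0}$ (with $\mathcal H(f)\geq \mathcal L(f)\geq 0$), and likewise $\mathcal H_1, \mathcal L_1$ for $h$. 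Since the products coincide, at each point $\alpha < \alpha_1$ iff $\mathcal H(f) < \mathcal H_1$.

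Sampson's Bochner-type identity for a harmonic map from a surface of curvature $-1$ into a Riemannian manifold of sectional curvature $\leq -1$ gives, at points where $\mathcal H(f) > 0$,
\[ \Delta_{g_0} \log \mathcal H(f) \;\geq\; 2(\mathcal H(f) - \mathcal L(f)) - 2, \]
with equality forcing $\mathrm{sec}_M \equiv -1$ on the image plane $df(T_xS)$. Applied to the harmonic diffeomorphism $h$ between two surfaces of curvature $-1$, the same identity is an equality. Subtracting, writing $w := \log(\mathcal H(f)/\mathcal H_1)$ and using $\mathcal L(f) = |\Phi|^2_{g_0}/\mathcal H(f)$, $\mathcal L_1 = |\Phi|^2_{g_0}/\mathcal H_1$, one obtains the semilinear inequality
\[ \Delta_{g_0} w \;\geq\; 2(e^w - 1)\bigl(\mathcal H_1 + e^{-w}\mathcal L_1\bigr), \]
whose right-hand side vanishes at $w = 0$, has the same sign as $w$, and has positive linearization $2\alpha_1$ at $w = 0$. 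The maximum of $w$ on the compact surface $S$ is attained at a point $x_0$ with $\mathcal H(f)(x_0) > 0$ (at the isolated zeros of $df$ one has $w = -\infty$); if $w(x_0) > 0$ the right-hand side is strictly positive there, contradicting $\Delta_{g_0} w(x_0) \leq 0$. Hence $w \leq 0$, i.e. $\alpha \leq \alpha_1$; and if moreover $w(x_0) = 0$, Hopf's strong maximum principle applied to the above semilinear inequality forces $w \equiv 0$ throughout $S$.

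The final step is to show that this residual case $w \equiv 0$ is the exceptional one of the lemma. In this case $\mathcal H(f) \equiv \mathcal H_1 > 0$, so $f$ is an immersion ($h$ being a diffeomorphism), and $f^*g \equiv h^*g_1$ has constant curvature $-1$. As a harmonic immersion from a surface, $f$ is a minimal isometric immersion from $(S, f^*g)$ into $(M, g)$, and the Gauss equation yields
\[ -1 \;=\; K(f^*g) \;=\; \mathrm{sec}_M(df(T_xS)) - \tfrac{1}{2}|II^f|^2 \;\leq\; -1 - \tfrac{1}{2}|II^f|^2, \]
forcing $II^f \equiv 0$ and $\mathrm{sec}_M \equiv -1$ on $df(T_xS)$. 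Thus $\tilde f : \tilde S \to M$ is a local isometry whose image is contained in a complete totally geodesic plane $\H \subset M$ of curvature $-1$; being a local isometry between simply connected surfaces of curvature $-1$, $\tilde f$ is a diffeomorphism onto $\H$, which is precisely the exceptional case of the lemma. The main obstacle in this plan is securing the sharp form of Sampson's Bochner inequality, together with its equality case, when the target is a general Riemannian manifold of sectional curvature $\leq -1$ rather than a surface (the case treated in \cite{Sampson}); once this is in hand, the maximum-principle and Gauss-equation steps are essentially routine.
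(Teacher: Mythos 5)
Your proposal follows essentially the same route as the paper: the same decomposition into holomorphic and antiholomorphic energies $H,L$ with $H+L=\alpha$, $HL=|\Phi|^2$, the same subtracted Bochner inequality for $w=\log(H/H_1)$, the same weak-then-strong maximum principle, and the same Gauss-equation analysis of the equality case (the paper's passage from $H_2<H_1$ to $f^*g<g_1$ goes through $(H-L)^2$, but your direct observation that $g_1-f^*g=(\alpha_1-\alpha)g_0$ is equivalent and slightly cleaner). The one step you flag as the ``main obstacle'' --- the sharp inequality $\Delta_0\log H \ge 2(H-L)-2$, with equality forcing the image plane to be totally geodesic of curvature $-1$, for a target that is a general Riemannian manifold of curvature $\le -1$ --- is exactly what the paper supplies, and it does so without redoing Sampson's higher-dimensional Bochner computation: on the open set $U$ where $f^*g$ is nondegenerate, the identity map $(U,g_0)\to(U,f^*g)$ is a harmonic map between surfaces (its Hopf differential is $\Phi$, which is holomorphic), so the classical Schoen--Yau identity gives $\Delta_0\log H = -2\kappa(f^*g)(H-L)-2$; the curvature input is then isolated in Lemma \ref{l:curvature bound}, which proves $\kappa(f^*g)\le -1$ with equality exactly when the second fundamental form vanishes, using the Gauss equation together with the fact that harmonicity makes each quadratic form $\langle II^N(df\cdot,df\cdot),n\rangle$ traceless, hence of nonpositive determinant. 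One small point to tighten in your write-up: the Bochner inequality is available where $f^*g$ is nondegenerate (i.e.\ $H>L$), not merely where $H>0$. This costs nothing at the maximum of $w$ (if $w\ge 0$ there, then $H\ge H_1>L_1\ge L$ forces $H>L$), and on the degenerate locus one has $H=L=|\Phi|_{g_0}=\sqrt{H_1L_1}<H_1$ directly, which is how the paper disposes of it; for the strong maximum principle one also uses that $U$ is either empty or dense.
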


Indeed, if we know this inequality holds, and since $S$ is compact, there is a constant $\lambda < 1$ such that $f^* g \leq \lambda^2 g_1$. Let $j_1$ be a holonomy representation of $g_1$ (i.e. a representation of $\Gamma$ into $\PSL(2,\R)$ such that $(S,g_1)$ is isometric to $j_1(\Gamma) \backslash \H^2$).
Then $f: (\tilde{S},\tilde{g_1}) \to \H^2$ is $\lambda$-Lipschitz and $(j_1, \rho)$-equivariant, and therefore $j_1$ dominates $\rho$.\\

\subsubsection{The functions $H_i$ and $L_i$}
Recall that $g_0$ induces a natural hermitian metric on the line bundle $K_S^2$. When given a quadratic differential $\Phi$, the function $|\Phi|^2_{g_0}$ is thus well-defined on $S$.

Let us fix a metric $g_0$ of curvature $-1$ on $S$, and let $g'$ be a non-negative symmetric $2$-form on $S$ of the form $$\alpha g_0 + \Phi + \bar{\Phi},$$ with $\alpha$ a real function and $\Phi$ a holomorphic quadratic differential. 

\begin{lem} \label{l:proposition}
We have:
$$e_{g_0}(g') = \alpha,$$
$$\det_{g_0} g' = e(g')^2 - 4 |\Phi|^2_{g_0}~.$$
\end{lem}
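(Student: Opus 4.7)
The plan is to reduce Lemma~\ref{l:proposition} to a pointwise linear-algebra identity and verify it in a well-adapted local frame. Fix an isothermal coordinate $z = x + i y$ for $g_0$, so that on the chart $g_0 = \lambda^2 |dz|^2$ for some positive function $\lambda$, and write the holomorphic quadratic differential as $\Phi = \phi\, dz^2$ with $\phi$ a local holomorphic function. The starting point is the expansion $dz^2 = (dx^2 - dy^2) + 2i\, dx\, dy$, from which a short computation gives
\[
\Phi + \bar\Phi \;=\; 2\,\mathrm{Re}(\phi)\,(dx^2 - dy^2) \;-\; 4\,\mathrm{Im}(\phi)\, dx\, dy.
\]
In particular, as a symmetric bilinear form on $T_xS$, the tensor $\Phi + \bar\Phi$ is $g_0$-trace-free.

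Adding the conformal term $\alpha g_0$, whose matrix in the frame $(\partial_x, \partial_y)$ is $\alpha\lambda^2 I$, and passing to the $g_0$-symmetric endomorphism $A$ defined by $g' = g_0(\cdot, A\cdot)$---which on the matrix level amounts to dividing by $\lambda^2$---one reads off directly that $\tr(A) = 2\alpha$ and that
\[
\det(A) \;=\; \Bigl(\alpha + \tfrac{2\mathrm{Re}(\phi)}{\lambda^2}\Bigr)\Bigl(\alpha - \tfrac{2\mathrm{Re}(\phi)}{\lambda^2}\Bigr) - \tfrac{4\,\mathrm{Im}(\phi)^2}{\lambda^4} \;=\; \alpha^2 - \tfrac{4|\phi|^2}{\lambda^4}.
\]
The trace identity gives $e_{g_0}(g') = \tfrac{1}{2}\tr(A) = \alpha$ immediately, which is the first assertion.

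For the second formula, the remaining task is to identify $|\Phi|^2_{g_0}$ in the chart. With the natural Hermitian metric induced by $g_0$ on the canonical bundle $K_S$, one has $\|dz\|_{g_0}^2 = \lambda^{-2}$, hence $\|dz^2\|_{g_0}^2 = \lambda^{-4}$ and $|\Phi|^2_{g_0} = |\phi|^2 \lambda^{-4}$. Substituting into the determinant formula yields $\det_{g_0}(g') = \alpha^2 - 4|\Phi|^2_{g_0} = e_{g_0}(g')^2 - 4|\Phi|^2_{g_0}$, as claimed. The only subtlety in the argument is the normalization of the Hermitian norm on $K_S^2$, which has to be fixed so as to match the constant $4$; once this convention (the standard one, implicit in the statement) is adopted, the whole verification is an elementary $2\times 2$ computation in one chart, and since both sides of each identity are coordinate-independent scalars on $S$, the identities hold globally.
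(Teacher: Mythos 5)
Your proof is correct and follows essentially the same route as the paper's: isothermal coordinates, expansion of $\Phi+\bar\Phi$ in the real frame $(dx,dy)$, and computation of the trace and determinant of the $g_0$-symmetric endomorphism $A$. You are in fact slightly more careful than the paper about the normalization of the Hermitian norm on $K_S^2$ (the convention $|\Phi|^2_{g_0}=|\phi|^2\lambda^{-4}$ is indeed the one needed for the factor $4$ in the statement and for the later identities $H+L=e$, $HL=|\Phi|^2$, $(H-L)^2=\det_{g_0}g'$).
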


\begin{proof}
In a local complex coordinate $z$, we have
$$g_0 = \sigma \d z \d\bar{z}$$
for some real positive function $\sigma$, and
$$g' = \alpha \sigma \d z \d \bar{z} + \phi \d z^2 + \bar{\phi} \d \bar{z}^2$$
for some complex valued (holomorphic) function $\phi$.
In coordinates $(x=\Re(z),y= \Im(z))$, we thus get
$$g_0 = \sigma(\d x^2 + \d y^2)$$
and $$g' = (\alpha \sigma + 2 \Re(\phi)) \d x^2 + (\alpha \sigma - 2 \Re(\phi))\d y^2 + 4 \Im(\phi) \d x \d y.$$
From this, we deduce that
$$e_{g_0}(g') = \alpha$$
and
$$\det_{g_0}(g') = \alpha^2 - 4\frac{| \phi|^2}{\sigma^2} = \alpha^2 - |\Phi|^2_{g_0}.$$
\end{proof}

Since $g'$ is non-negative, we obtain that $e(g')^2 - 4 |\Phi|^2_{g_0}\geq 0$, from which we can deduce that the system of equations
$$
\left\{
\begin{array}{rcl}
x+y &=& e(g')\\
xy &=& |\Phi|^2_{g_0}
\end{array}
\right.
$$
has two non-negative (eventually identical) solutions. We will use the following lemma:

\begin{lem}
Let $H$ and $L$ be the two functions on $S$ such that 
\begin{itemize}
\item $H \geq L$
\item $H+L = e_{g_0}(g')$
\item $HL = |\Phi|^2_{g_0}$
\end{itemize}
Then $(H-L)^2 = \det_{g_0} g'$, and wherever $g'$ is non degenerate, $H$ and $L$ are solutions of the partial differential equation:
\begin{equation} \label{eq:Bochner}
\Delta_0 \log(u) = -2 \kappa(g') \left( u - \frac{|\Phi|^2}{u} \right) -2
\end{equation}
where $\kappa(g')$ denotes the Gauss curvature of the Riemannian metric $g'$.
\end{lem}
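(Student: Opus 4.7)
The first assertion is pure algebra: by definition of $H$ and $L$,
$$(H-L)^2 = (H+L)^2 - 4HL = e_{g_0}(g')^2 - 4|\Phi|^2_{g_0},$$
which equals $\det_{g_0} g'$ by Lemma 2.5.

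To prove the PDE, the plan is to reinterpret $H$ and $L$ as the holomorphic and antiholomorphic energies of a harmonic map. On the open set $U \subset S$ where $g'$ is non-degenerate, $g'$ is a genuine Riemannian metric, so one may consider the identity map $\iota : (S, g_0) \to (U, g')$. Its Hopf differential relative to the conformal class $[g_0]$ is the $(2,0)$-part of $\iota^* g' = g'$, that is, $\Phi$ itself. Since $\Phi$ is holomorphic and the target has complex dimension one, Proposition 2.8 implies that $\iota$ is harmonic. Writing $d\iota = \partial \iota + \bar\partial \iota$, the classical identities
$$|\partial \iota|^2 + |\bar\partial \iota|^2 = e_{g_0}(\iota) = e_{g_0}(g') \quad\text{and}\quad |\partial \iota|^2 \cdot |\bar\partial \iota|^2 = |\Phi_\iota|^2_{g_0} = |\Phi|^2_{g_0},$$
combined with $|\partial \iota| \geq |\bar\partial \iota|$, force $|\partial \iota|^2 = H$ and $|\bar\partial \iota|^2 = L$.

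It then remains to invoke the Bochner--Sampson identity for harmonic maps between Riemannian surfaces (cf.\ \cite[Theorem 13]{Sampson}), which in the present setting reads
$$\Delta_{g_0} \log |\partial \iota|^2 = 2\kappa(g_0) - 2\kappa(g')\bigl(|\partial \iota|^2 - |\bar\partial \iota|^2\bigr)$$
wherever $|\partial \iota|^2 > 0$. Substituting $\kappa(g_0) = -1$ and $|\bar\partial \iota|^2 = |\Phi|^2_{g_0}/H$ yields exactly the stated PDE for $u = H$. The analogous identity for $u = L$ follows by the symmetric Bochner identity applied to $|\bar\partial \iota|^2$; equivalently, one can derive it from the equation for $H$ by writing $\log L = \log |\Phi|^2_{g_0} - \log H$ and observing that, in isothermal coordinates $g_0 = \sigma |dz|^2$ with $\Phi = \phi\, dz^2$, one has $\log|\Phi|^2_{g_0} = \log|\phi|^2 - 2\log\sigma$, the first term being locally harmonic away from the isolated zeros of $\Phi$ and the second satisfying $\Delta_0 \log \sigma = -2\kappa(g_0) = 2$.

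\textbf{Main difficulty.} The substantive content is Sampson's Bochner identity; the work here is mainly to identify $H$ and $L$ with the harmonic-map energies $|\partial \iota|^2$ and $|\bar\partial \iota|^2$ and to rewrite the classical formula in the given notation. A minor technicality is that $L$ vanishes at the (necessarily isolated) zeros of $\Phi$, so that $\log L$ is undefined there; the equation for $u = L$ should be read on the complement of the zero set of $\Phi$.
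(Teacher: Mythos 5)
Your proposal is correct and follows essentially the same route as the paper: the algebraic identity $(H-L)^2=(H+L)^2-4HL$ combined with the determinant formula, then the identification of $H$ and $L$ with $\norm{\partial \iota}^2$ and $\norm{\bar\partial \iota}^2$ for the identity map $(U,g_0)\to(U,g')$ (harmonic because its Hopf differential $\Phi$ is holomorphic), and finally the Schoen--Yau/Sampson Bochner identity with $\kappa(g_0)=-1$. Your extra derivation of the equation for $L$ from the one for $H$, and the remark about the zeros of $\Phi$, are correct details that the paper leaves to the cited references.
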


\begin{proof}
The fact that $(H-L)^2 = \det_{g_0} g'$ is just a reformulation of Lemma \ref{l:proposition}. The second point is a classical fact that can be found for instance in Schoen--Yau's paper \cite{SY} (see also \cite{Wolf}). Let us denote $U$ the domain of $S$ where $g'$ is non degenerate and $f$ the identity on $U$, seen as a map from $(U,g_0)$ to $(U,g')$. Then
\[H = \norm{\partial f}^2\]
and 
\[L = \norm{\bar{\partial} f}^2\]
(see \cite[Equation (6)]{SY}). Equation \eqref{eq:Bochner} then follows from the fact that $f$ is harmonic, since its Hopf differential is holomorphic \cite[equations (16) and (17)]{SY}.
\end{proof}

Let's go back to our setting, where $f^*g = e(f) g_0 + \Phi + \bar{\Phi}$ and where $g_1 = e(g_1) g_0 + \Phi + \bar{\Phi}$. We introduce $H_1$ and $L_1$ such that
$$H_1 \geq L_1,$$
$$e(g_1) = H_1+ L_1$$ and
$$H_1 L_1 = |\Phi |^2~.$$
Similarly, let $H_2$ and $L_2$ be such that $H_2 \geq L_2$,
$$e(f) = H_2 + L_2$$ and 
$$H_2 L_2 = |\Phi |^2~.$$
Then we have:
\begin{itemize}
\item $L_1, H_1, L_2, H_2$ are non-negative, and $H_1 > L_1$ everywhere,
\item $\Vol_1 = (H_1-L_1)\Vol_0$,
\item $\det_{g_0} f^*g = (H_2-L_2)^2$,
\item $H_1, L_1$ are both solutions of the following partial differential equation: 
\begin{equation} \label{pde}
\Delta_0 \log(u) = 2u - 2 \frac{|\Phi|^2}{u}-2
\end{equation}
where $\Delta_0$ is the Laplace operator associated to the metric $g_0$,
\item On the open set $U$ of $S$  where $f^*g$ is non degenerate, $H_2, L_2$ are both smooth and solutions of 
\begin{equation} \label{pde2}
\Delta_0 \log(u) = 2\beta u - 2 \beta \frac{|\Phi|^2}{u} - 2,
\end{equation}
where $\beta = -\kappa$ and $\kappa$ is the sectional curvature of the Riemannian metric $f^* g$. 
\end{itemize}

\begin{rmk}
The domain $U$ where $f^*g$ is non-degenerate is either empty or dense, by \cite[Corollary of Theorem 3]{Sampson}.
\end{rmk}

The following result is presumably well-known. For instance, it can be read between the lines in \cite[Lemma C.4]{Reznikov} and \cite[Theorem 7]{Sampson}. Since it is crucial in our approach, we detail the argument here.

\begin{lem} \label{l:curvature bound}
For all $x\in U$ we have $\kappa (f^* g)(x) \leq -1$. Moreover, the inequality is strict, unless the second fundamental form of $f(\tilde{S})$ vanishes at $f(\tilde{x})$ ($\tilde{x}$ being any lift of $x$ in $\tilde{S}$). In particular, if $\kappa (f^* g)$ is identically $-1$ on $U$, then the image of $\tilde{U}$ by $f$ is totally geodesic. 
\end{lem}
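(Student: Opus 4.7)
The plan is to prove the pointwise curvature bound via the Gauss equation for the immersed surface $f(\tilde U) \subset M$, with the harmonicity of $f$ providing the crucial constraint on the second fundamental form. On $U$ the map $f$ is an immersion, so the pullback metric $f^*g$ coincides locally with the metric that $f(\tilde U)$ inherits from $M$. For any $f^*g$-orthonormal basis $(e_1, e_2)$ of $T_x \tilde U$, the Gauss equation reads
\begin{equation*}
\kappa(f^*g)(x) \;=\; K_M\bigl(df_x(T_x\tilde U)\bigr) \;+\; \langle B(e_1,e_1),\, B(e_2,e_2)\rangle \;-\; \|B(e_1,e_2)\|^2,
\end{equation*}
where $B$ denotes the second fundamental form of the immersed surface, a normal-bundle-valued symmetric $2$-tensor. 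Since $K_M \leq -1$, it suffices to show that the Gauss correction term is nonpositive, and vanishes only when $B(x) = 0$.

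To bring harmonicity into play, I would first identify $B$ with the normal component of the tensor $II^f$ introduced in \eqref{eq:second fundamental form of f}. The two differ only by a tangent term: using the Gauss decomposition $\nabla^g_{df(X)} df(Y) = df(\nabla^{f^*g}_X Y) + B(X,Y)$ together with the definition of $II^f$, one finds $II^f(X,Y) = df\bigl(\nabla^{f^*g}_X Y - \nabla^{g_0}_X Y\bigr) + B(X,Y)$, so $B = \pi^\perp \circ II^f$. The harmonicity identity $\Tr_{g_0}(II^f) = 0$ then projects to $B(v_1,v_1) + B(v_2,v_2) = 0$ for every $g_0$-orthonormal basis $(v_1, v_2)$ of $T_x \tilde U$.

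The final step is to choose $(v_1, v_2)$ to simultaneously diagonalize $g_0$ and $f^*g$, with $f^*g$-eigenvalues $\mu_1, \mu_2 > 0$ on $U$, and to rescale to the $f^*g$-orthonormal basis $e_i := v_i / \sqrt{\mu_i}$. Substituting $B(v_2,v_2) = -B(v_1,v_1)$, a short bilinear computation collapses the Gauss correction to
\begin{equation*}
\langle B(e_1,e_1),\, B(e_2,e_2)\rangle \;-\; \|B(e_1,e_2)\|^2 \;=\; -\,\frac{\|B(v_1,v_1)\|^2 + \|B(v_1,v_2)\|^2}{\mu_1\,\mu_2} \;\leq\; 0.
\end{equation*}
This yields $\kappa(f^*g)(x) \leq -1$, with equality forcing $B(v_1,v_1) = B(v_1,v_2) = 0$ and hence $B \equiv 0$ at $x$. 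The global statement then follows pointwise: if $\kappa(f^*g) \equiv -1$ on $U$, then $B \equiv 0$ on $U$, so $f(\tilde U)$ is totally geodesic in $M$. I expect the only non-routine point to be the identification $B = \pi^\perp \circ II^f$, which requires keeping track of which Levi-Civita connection is used to define the second fundamental form; once that is in place, harmonicity plays exactly the role that minimality plays in the classical curvature estimate for minimal surfaces in a negatively curved ambient space, modulo the diagonalizing change of basis that converts a $g_0$-trace into an $f^*g$-trace.
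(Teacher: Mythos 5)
Your proposal is correct and follows essentially the same route as the paper: the Gauss equation for the locally embedded image, the identification of the normal part of $II^f$ with the second fundamental form of the image, harmonicity forcing that form to be $g_0$-trace-free, and the resulting nonpositivity of the Gauss correction term with equality characterizing total geodesy. The only cosmetic difference is in the last step, where the paper packages the sign argument as an average over unit normals of $g_0$-determinants of trace-free quadratic forms, while you obtain the same inequality by an explicit computation in a basis simultaneously diagonalizing $g_0$ and $f^*g$.
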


\begin{proof}
By definition of $U$, $f$ restricted to the lift $\tilde{U}$ is an immersion. Let $V\subset \tilde{U}$ be an open subset, small enough so that $N : = \tilde{f}(V)$ is an embedded submanifold. Since $f: (V, f^*g) \to (N,g_N)$ is an isometry, the only thing we want to prove is that $(N, g_N)$ has curvature $\leq -1$. Take an orthonormal frame $e_1, e_2$ of $TN$. The curvature of $N$ is related to the sectional curvature of $TN$ in the ambient space $M$ by the following relation:
\begin{equation}  \kappa ^N   = \kappa ^M (TN) + \langle II^N(e_1,e_1), II^N(e_2,e_2) \rangle - || II^N(e_1,e_2)|| ^2, \end{equation}
where $II^N (u,v) $ is the second fundamental form of $N$. This formula can be re-expressed as 
\begin{equation} \label{eq:extrinsic intrinsic curvatures} \kappa^N = \kappa^M (TN) + \frac{\codim(N)}{\text{Jac}(f)^2} \cdot \mathbb E \left( \det_{g_0} ( \langle II^N (df(\cdot), df(\cdot) ), n \rangle)  \right) \end{equation}
where the average is taken over all the unitary vectors $n$ normal to $N$ with respect to normalized Haar measure, and $\text{Jac}(f)$ stands for the Jacobian of the map $f : (V, g_0) \rightarrow (N, g)$. 

The second fundamental form of $f$ (see \eqref{eq:second fundamental form of f}) and of $N$ are related by  
\begin{equation} \label{eq:second fundamental form} 
II^f (X,Y)= II ^N ( df (X), df(Y)) + df \left( \nabla _X^{f^* g} Y - \nabla _X^{g_0} Y\right).
\end{equation}
In particular, since both summands on the right hand side are orthogonal, using the harmonicity of $f$, we infer that $$\Tr_{g_0} \ II^N (df(\cdot), df(\cdot))=0 ,$$ and in particular for every unitary vector $n\in TM$ normal to $TN$, we get 
$$  \Tr_{g_0} \langle II^N (df(\cdot), df(\cdot)), n \rangle =0. $$
This shows that the eigenvalues of the quadratic form $ \langle II^N (df(\cdot), df(\cdot)), n \rangle$ are opposite, hence $\det_{g_0} ( \langle II^N (df(\cdot), df(\cdot) ), n \rangle)\leq 0$, with equality if only if the quadratic form \mbox{$\langle II^N (df(\cdot), df(\cdot)), n \rangle$} vanishes. From \eqref{eq:extrinsic intrinsic curvatures}, we deduce that $\kappa^N\leq \kappa^M(TN) \leq -1$. Equality implies the vanishing of the second fundamental form. This proves the lemma.
\end{proof}

\begin{lem}\label{Ineq}
%Let $u$ and $v$ be two non-negative solutions of (\ref{pde}) on $S$, and assume that $v$ does not vanish. Then either $u < v$ everywhere, or $u \equiv v$.
Either $f$ induces a diffeomorphism from $\tilde{S}$ to a totally geodesic plane $\mathbb H^2 \subset M$ of curvature $-1$, or we have $$H_2 <H_1.$$
\end{lem}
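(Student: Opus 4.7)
The plan is to apply a maximum principle argument to the function $w := \log(H_2/H_1)$, which is well-defined and smooth on $U$: $H_1 > L_1 \geq 0$ yields $H_1 > 0$ on $S$, and on $U$ one has $H_2 > L_2 \geq 0$. Substituting $H_2 = H_1 e^w$ and, using $H_2 L_2 = |\Phi|^2 = H_1 L_1$, $L_2 = L_1 e^{-w}$ into the difference of \eqref{pde2} and \eqref{pde}, I obtain on $U$
\[
\Delta_0 w \;=\; F(x,w) \;:=\; 2\beta H_1(e^w - 1) \;+\; 2\beta L_1(1 - e^{-w}) \;+\; 2(\beta - 1)(H_1 - L_1).
\]
Two features of $F$ will be essential: by Lemma \ref{l:curvature bound} $\beta \geq 1$, and since $H_1 > L_1$, one has $F(x,0) = 2(\beta - 1)(H_1 - L_1) \geq 0$ with equality iff $\beta(x) = 1$; moreover $\partial_w F = 2\beta(H_1 e^w + L_1 e^{-w}) > 0$, so $F$ is strictly increasing in $w$, and in particular $F(x,w) > 0$ whenever $w > 0$.

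For the weak maximum principle step, $w$ extends upper semi-continuously to $S$ (with value $-\infty$ where $H_2=0$) and hence attains a maximum on the compact surface. On $S \setminus U$ the form $f^*g$ is degenerate, so $H_2 = L_2 = |\Phi|$, whereas $H_1 > |\Phi|$ (from $H_1 > L_1$ and $H_1 L_1 = |\Phi|^2$), giving $w < 0$ there. If the maximum is attained outside $U$ there is nothing more to prove; otherwise at an interior maximizer $x_0 \in U$ one has $\Delta_0 w(x_0) \leq 0$, so $F(x_0, w(x_0)) \leq 0$, and strict monotonicity of $F(x_0,\cdot)$ then forces $w(x_0) \leq 0$. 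Either way $H_2 \leq H_1$ on all of $S$.

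To upgrade to strict inequality (or reach the rigidity alternative), suppose $H_2(x_0) = H_1(x_0)$ at some $x_0$; this point must lie in $U$, in some connected component $U_0$. Writing $F(x,w) = F(x,0) + c(x) w$ with $c(x) := \int_0^1 \partial_w F(x, \theta w(x))\, d\theta > 0$, the equation becomes $\Delta_0 w - c(x) w = F(x,0) \geq 0$. The classical strong maximum principle (the zero-order coefficient $-c$ being non-positive) then shows that $\{w = 0\}$ is open in $U_0$; being also closed and non-empty, it equals $U_0$. Substituting $w \equiv 0$ back gives $F(x,0) \equiv 0$, hence $\beta \equiv 1$, on $U_0$, and combined with $H_2 = H_1$, $L_2 = L_1$ this yields $f^*g \equiv g_1$ on $U_0$. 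By continuity $f^*g = g_1$ on $\overline{U_0}$, but $f^*g$ is degenerate on $\overline{U_0} \setminus U_0 \subset S \setminus U$ while $g_1$ is not; hence $\overline{U_0} = U_0$, and connectedness of $S$ forces $U_0 = S$. Therefore $f^*g \equiv g_1$ everywhere and $\kappa(f^*g) \equiv -1$; by Lemma \ref{l:curvature bound} the image $f(\tilde S)$ is a totally geodesic plane of curvature $-1$ in $M$, and $f : (\tilde S, g_1) \to f(\tilde S)$, being a local isometry between simply connected complete surfaces of constant curvature $-1$, is a global isometric diffeomorphism. The main technical subtlety lies precisely here: carrying out the strong maximum principle on $U_0$ and then globalizing $f^*g = g_1$ across $\partial U$ via continuity of $f^*g$.
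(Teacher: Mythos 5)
Your proof is correct and follows essentially the same route as the paper: the same difference of the two Bochner equations for $w=\log(H_2/H_1)$, the input $\beta\geq 1$ from Lemma \ref{l:curvature bound}, a weak maximum principle at an interior maximizer to get $H_2\leq H_1$, the observation that $H_2=|\Phi|<H_1$ off $U$, and a strong maximum principle to force either strict inequality or the rigidity alternative. The only differences are cosmetic: you invoke the classical Hopf strong maximum principle via the linearization $c(x)=\int_0^1\partial_wF(x,\theta w)\,\mathrm{d}\theta$ where the paper cites Picard's lemma with the explicit constant $K=2\max_S(H_1+L_2)$, and you are somewhat more careful than the paper about working component by component and globalizing $f^*g=g_1$ across $\partial U$.
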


\begin{proof}
Recall that $U$ is the  domain of $S$ where $f^*g$ is non-degenerate. First, note that on the complement of $U$, we have
$$H_ 2 = L_ 2 = \sqrt{|\Phi|^2_{g_0}} = \sqrt{L_1 H_1} < H_ 1.$$ 
We shall then only focus on what happens in the domain $U$.

 Note that $H_1$ does not vanish since $H_1>L_1 \geq 0$. Let $x$ be a point on $S$ such that $H_2(x)/H_1(x)$ is maximal. Assume by contradiction that $H_2(x) > H_1(x)$. Then since $H_2 L_2 = H_1 L_1$, we have $L_2(x) < H_2(x)$, and therefore $x$ belongs to $U$. Equations \eqref{pde} and \eqref{pde2} and the relation $H_2 L_2 = |\Phi|^2_{g_0}$ show that at the point $x$, 
\begin{equation} \label{eq:long equation} \Delta_0 \log \left( \frac{H_2}{H_1} \right)  = 2 (H_2 - H_1 ) +  2 |\Phi|^2_{g_0} \left( \frac{1}{H_1} - \frac{1}{H_2} \right)  + 2 (\beta -1) \left( H_2 -   L_2 \right), \end{equation}
where $\beta = -\kappa(f^*g)$. By Lemma \ref{l:curvature bound}, we have $\beta \geq 1$. Since $H_2 \geq L_2$ by hypothesis, the last summand in equation \eqref{eq:long equation} is non negative. Therefore, the assumption $H_2 > H_1$ clearly implies that $\Delta_0 \log \left( \frac{H_2}{H_1} \right)(x) >0$, which contradicts the fact that $\log \left( \frac{H_2}{H_1} \right)$ admits a maximum at $x$. At this maximum, we must have $H_2 \leq H_1$, and thus $H_2 \leq H_1$ everywhere.

To get the strict bound, we will use the following version of the strict maximum principle, which was probably already known by Picard (see \cite{Minda}, Theorem 1 for a slightly more general version).

\begin{lem}[Picard] \label{StrictIneq}
Let $w$ be a real non-positive function on a domain $U$ of $\C$, such that
$\Delta w \geq K w$ for some constant $K>0$. Then either $w \equiv 0$ on $U$, or $w<0$ on $U$.
\end{lem}
In this theorem, $\Delta$ is a priori the Laplace operator associated to a flat metric, but, since it is a local result, the conclusion still holds for any conformal metric. We apply Lemma \ref{StrictIneq} to the function  $w = \log \left( \frac{H_2}{H_1}\right)$. Since $\beta \geq 1$, equation \eqref{eq:long equation} shows that $$\Delta_0 \log \left( \frac{H_2}{H_1} \right)  \geq \left( 2 + \frac{2|\Phi|^2_{g_0}}{H_1 H_2} \right) (H_2 - H_1 ),$$ so that we get 
\[   \Delta_0 w \geq 2(H_1 + L_2) ( e^w -1 ) \geq K w , \]
where $K := 2 \max_S (H_1+L_2)$. Lemma \ref{StrictIneq} shows that either $H_2$ is identically equal to $H_1$ on $U$, or $H_2 < H_1$ on $U$. But on the complement of $U$, we already saw that $H_2<H_1$. In the case $H_2 = H_1$, the complement of $U$ is thus empty and equation \eqref{eq:long equation} shows that necessarily $\beta = 1$ on $S$, hence $\kappa (f^* g) = -1$ on $S$. Lemma \ref{l:curvature bound} shows that $\mathbb H^2 = f(\tilde{S})$ is a totally geodesic plane of curvature $-1$. Moreover, in that case, we also have $L_2 = L_1$ and thus $f^*g = g_1$, which means that $f: (S,g_1) \to M$ is an isometric embedding. Hence Lemma \ref{Ineq} is proved. 
\end{proof}

Let us finish the proof of Lemma \ref{Contracting}.
According to Lemma \ref{Ineq}, if $f$ is not an isometric embedding with totally geodesic image, then we have $H_2 < H_1$ everywhere. Since $H_2 L_2 = H_1 L_1$ we also have $L_2 > L_1$. Therefore, $(H_2-L_2)^2 < (H_1-L_1)^2$, and by adding $4H_2 L_2 = 4 H_1 L_1$ to each member, we get that $(H_2 + L_2)^2 < (H_1+L_1)^2$. 

Now, remember that $f^*g = (H_2 + L_2) g_0 + \Phi + \bar{\Phi}$ and $g_1 = (H_1 + L_1) g_0 + \Phi + \bar{\Phi}$. It is then clear that $H_2 +L_2 < H_1 +L_1$ implies $f^*g < g_1$.

\section{Extension of the theorem to lattices in $\mathrm{PSL}(2,\R)$ with torsion} \label{s:orbifold}

%%Let $S$ be a closed surface of genus $\geq 2$, $\Gamma$ its fundamental group, $M$ a complete simply connected riemannian manifold of sectional curvature $\leq 1$, and $\rho$ a representation of $\Gamma$ into $\Isom(M)$ that does not fix a point at infinity.

%%In our proof, we constructed a map $\mathbf{\Psi}: \mathcal{T}_S \to \mathcal{T}_S$ whose image lies in the domain of fuchsian representations dominating $\rho$. It should be intersting to say more about this map $\mathbf{\Psi}$. Is it smooth? Is it open? Is the image of $\Psi$ exactly the domain of fuchsian representations that strictly dominate $\rho$? We hope to answer those questions in an further work. 

Here we extend the theorem when $\Gamma$ is a lattice in $\text{PSL}(2,\R)$ with torsion.

\begin{theo}
Let $\Gamma$ be a lattice in $\PSL(2,\R)$ and $\rho$ a representation of $\Gamma$ into the isometry group of a smooth complete simply connected Riemannian manifold $M$ of sectional curvature $\leq -1$. Then there exists a Fuchsian representation $j: \Gamma \to \PSL(2,\R)$ and a $(j,\rho)$-equivariant map from $\H^2$ to $M$ which is either a contraction or an isometric and totally geodesic embedding.
\end{theo}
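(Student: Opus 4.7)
The plan is to reduce to Theorem A via Selberg's lemma and to use the uniqueness statements in Labourie's and Wolf's theorems to upgrade $\Gamma'$-equivariance to full $\Gamma$-equivariance. Let $\Gamma' \trianglelefteq \Gamma$ be a torsion-free normal subgroup of finite index (Selberg), and set $F := \Gamma/\Gamma'$. In the cocompact case $S := \Gamma' \backslash \H^2$ is a closed oriented surface of negative Euler characteristic carrying a hyperbolic metric $g_0$, and $F$ acts on $(S, g_0)$ by orientation-preserving isometries, hence holomorphically.

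First I would dispose of the case where $\rho(\Gamma)$ fixes a point in $\partial_\infty M$, following subsection \ref{ss:ElementaryCase}: the associated morphism $m \circ \rho : \Gamma \to \R$ automatically vanishes on torsion, so the auxiliary representation $\rho'$ into $\Isom(\H^2)$ preserving a geodesic is well-defined on $\Gamma$, and one dominates $\rho'$ directly. In the generic case where $\rho$ fixes no point at infinity, the restriction $\rho|_{\Gamma'}$ cannot fix a point at infinity either: otherwise its finite set of fixed boundary points would be permuted by $\Gamma$ via $\rho$, and $\rho(\Gamma)$ would fix its circumcenter in $\partial_\infty M$ (or in $M$ after a standard averaging argument). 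Labourie's theorem then produces a unique $(\Gamma', \rho|_{\Gamma'})$-equivariant harmonic map $f : \H^2 \to M$. For each $\gamma \in \Gamma$, normality of $\Gamma'$ implies that $x \mapsto \rho(\gamma)^{-1} f(\gamma \cdot x)$ is also $(\Gamma', \rho|_{\Gamma'})$-equivariant and harmonic, so by uniqueness it equals $f$; hence $f$ is in fact $(\Gamma, \rho)$-equivariant.

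Write $f^* g = \alpha g_0 + \Phi + \bar{\Phi}$ on $S$ as in section \ref{s:proof}. The $\Gamma$-equivariance of $f$ forces the holomorphic quadratic differential $\Phi$ to be $F$-invariant, so $\Phi$ descends to the underlying orbifold. Applying Wolf's theorem on the closed Riemann surface $S$ yields a unique hyperbolic metric $g_1 = \alpha_1 g_0 + \Phi + \bar{\Phi}$; by the very same uniqueness statement, $g_1$ is $F$-invariant and descends to a hyperbolic orbifold structure on $\Gamma \backslash \H^2$. The holonomy of this structure is a Fuchsian representation $j : \Gamma \to \PSL(2,\R)$, which automatically sends torsion elements of $\Gamma$ to elliptic elements of the correct order since the underlying orbifold type is unchanged.

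It remains to invoke Lemmas \ref{l:curvature bound}, \ref{Ineq}, and \ref{Contracting} verbatim: these are pointwise analytic statements on $\H^2$ comparing $f^*g$ with $\tilde{g}_1$ through the solutions $H_i, L_i$ of the Bochner equations, and they are insensitive to the group action. Compactness of the orbifold $\Gamma \backslash \H^2$ promotes the pointwise inequality $f^*g < \tilde{g}_1$ to a uniform bound $f^*g \leq \lambda^2 \tilde{g}_1$ with $\lambda < 1$, unless $f$ realizes an isometric totally geodesic embedding onto a $\H^2 \subset M$ of curvature $-1$. The main subtlety—and the principal obstacle—is the systematic use of uniqueness clauses to propagate $F$-equivariance along each step of the construction; this is exactly what spares us from having to construct, by hand, a harmonic map or a Fuchsian holonomy that behaves well on elliptic elements of $\Gamma$. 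For non-cocompact lattices, the argument should persist provided one substitutes for Wolf's theorem its analogue for finite-area Riemann surfaces and quadratic differentials with at worst simple poles at the cusps, but the necessary energy estimates near the cusps require genuine extra work.
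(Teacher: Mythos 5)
Your handling of the main case --- Selberg's lemma, then propagating $\Gamma/\Gamma'$-invariance through the uniqueness clauses of Labourie's and Wolf's theorems --- is precisely the paper's argument and is correct. The gap lies in your reduction of the exceptional cases. You split according to whether $\rho(\Gamma)$ fixes a point of $\partial_\infty M$ and assert that, if it does not, then $\rho|_{\Gamma'}$ fixes no boundary point either. This implication is false; the correct dichotomy (the one the paper uses) is whether $\rho(\Gamma)$ has a \emph{finite orbit} on $\partial_\infty M$. Concretely, $\rho$ may act through an infinite dihedral group of isometries of a geodesic of $M$: then $\rho(\Gamma)$ fixes no boundary point and no interior point (so there is no circumcenter for your parenthetical argument to produce), yet $\rho(\Gamma')$ can fix the two endpoints of that geodesic, so Labourie's uniqueness is unavailable for $\rho|_{\Gamma'}$ and your upgrade from $\Gamma'$- to $\Gamma$-equivariance breaks down. (Similarly, if $\rho(\Gamma')$ fixes three or more boundary points it is bounded; that configuration is an easy case to dispatch with a constant map, not a contradiction.)

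Exactly in these elementary situations (a finite boundary orbit with one or two points) the paper needs one further idea that your proposal omits: the relevant equivariant harmonic map then takes values in $\R$ (via the Busemann function, or the preserved geodesic) and is only unique \emph{up to translation}. Uniqueness up to translation still forces the Hopf differential $\Phi$ to be $\Gamma$-invariant, so the Fuchsian representation $j$ exists as before, but it does not make the map itself $(j,\rho)$-equivariant. The paper repairs this by replacing $f$ with the barycenter $\bar{f}=\frac{1}{[\Gamma:\Gamma']}\sum_{\gamma\in\Gamma/\Gamma'}\gamma\cdot f$, which is equivariant and still contracting. Your phrase ``one dominates $\rho'$ directly'' conceals precisely this step; note also that in the swapped-endpoints case the target group is $\Isom(\R)$ rather than $\R$, so your observation that $m\circ\rho$ kills torsion does not apply there. (Your caveat about non-cocompact lattices is fair --- the paper's own proof takes $\Gamma'\backslash\H^2$ to be closed, hence implicitly assumes $\Gamma$ cocompact.)
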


\begin{rmk}
Note that this result is relevant to the study of anti-de Sitter Lorentz manifolds of dimension $3$. Indeed, even when $\Gamma$ is a lattice in $\PSL(2,\R)$ with torsion, the action of $\Gamma_{j,\rho}$ on $\PSL(2,\R)$ may not have any fixed point. If $j$ strictly dominates $\rho$, then the action of $\Gamma_{j,\rho}$ on $\PSL(2,\R)$ is properly discontinuous and cocompact. It is free if, furthermore, for any $\gamma \in \Gamma$ with torsion, $\rho(\gamma)$ has order strictly smaller than $\gamma$. In that case, the quotient $\Gamma_{j,\rho} \backslash \PSL(2,\R)$ is a smooth anti-de Sitter $3$-manifold which is a Seifert bundle over the orbifold $j(\Gamma) \backslash \H^2$.
\end{rmk}

\begin{proof}

By Selberg's lemma, we can consider a finite index normal subgroup $\Gamma_0 \subset \Gamma$ which is torsion-free. Hence the quotient $S= \Gamma_0 \backslash \H^2$ is a closed hyperbolic surface. We now mimic the proof of the main theorem.

Assume first that the action of $\rho(\Gamma)$ on $\partial_\infty M$ does not have any finite orbit, $\rho(\Gamma_0)$ does not fix a point in $\partial_\infty M$. Let $f$ be the unique $(\Gamma_0, \rho)$-equivariant harmonic map from $\H^2$ to $M$. We prove that $f$ is actually $(\Gamma, \rho)$-equivariant. Indeed, for some $\gamma \in \Gamma$, consider the map $\gamma \cdot f: x \to \rho(\gamma)^{-1}f(\gamma \cdot x)$. It is harmonic and $(\Gamma_0, \rho)$-equivariant. By uniqueness, $\gamma \cdot f = f$, and we get that $f(\gamma \cdot x) = \rho(\gamma) \cdot f(x)$. This being true for any $\gamma \in \Gamma$, we obtain that $\tilde{f}$ is $(\Gamma,\rho)$-equivariant.

Let $g_0$ denote the hyperbolic metric on $S=\Gamma_0 \backslash \H^2$. Then $g_0$ is $\Gamma / \Gamma_0$-invariant, and so is the Hopf differential $\Phi$ of $\tilde{f}$. Hence, the unique hyperbolic metric $g_1$ on $S$ of the form $g_1 = \alpha g_0 + \Phi + \bar{\Phi}$ is $\Gamma / \Gamma_0$-invariant. Therefore, $g_1$ induces an orbifold hyperbolic metric on the quotient of $S$ by $\Gamma / \Gamma_0$. The holonomy of this metric gives a representation $j: \Gamma \to \PSL(2,\R)$. By construction, the map $\tilde{f}$ is $(j,\rho)$-equivariant and for the same reason as before, it is either a contraction or an isometric and totally geodesic embedding.

Again, we must deal separately with the specific case where $\rho(\Gamma)$ has a finite orbit in $\partial_\infty M$. If this orbit contains at least three points, then it has a well-defined barycenter in $M$ which is fixed by $\rho(\Gamma)$ and the length spectrum of $\rho$ is identically $0$. When this orbit has one or two points, the length spectrum of $\rho$ is the same as the one of some representation $m: \Gamma \to \Isom(\R)$ (see section \ref{ss:ElementaryCase}).

In that case, one can still consider a $(\Gamma_0,m)$-equivariant harmonic map $f: \H^2 \to \R$. Some care must be taken because this map is only unique up to translation. The map $\gamma \cdot f$ defined as before is still $(\Gamma_0, m)$-equivariant and harmonic. It differs from $f$ by a translation, so that they both have the same Hopf differential $\Phi$. Therefore, $\Phi$ is $\Gamma$-invariant and the Fuchsian representation associated to $g_1$ still extends to a representation $j:\Gamma\to \PSL(2,\R)$. Lastly, the map $f$ may not be $(j,\rho)$-equivariant, but one easily checks that the barycenter map
\[\bar{f} = \frac{1}{[\Gamma:\Gamma_0]}\sum_{\gamma \in \Gamma/\Gamma_0} \gamma \cdot f\]
is $(j,\rho)$-equivariant and still contracting.

\end{proof}

\section{Perspectives for higher rank representations}

If $G$ is a simple Lie group of rank $\geq 2$ and $X$ its symmetric space, then the sectional curvature of $X$ is not bounded away from $0$ and our theorem does not apply to representations in $G$. We shall state some remarks and questions in this context.

First, we need to conveniently choose a normalization of the metric on $X$. To do this, one can note that, though the sectional curvature is not pinched away from $0$, there is a negative upper bound on the curvature of totally geodesic hyperbolic planes. We  choose to normalize the metric on $X$ so that this upper bound is exactly $-1$.

With this convention, it is not true in general that a representation of $\Gamma$ into $G$ can be dominated by a Fuchsian one unless it is Fuchsian in restriction to some stable hyperbolic plane of curvature $-1$. 
When $G$ is $\PSL(n,\R)$, for instance, those Fuchsian representations preserving a totally geodesic plane of curvature $-1$ can be continuously deformed into so-called \emph{Hitchin representations}. In contrast with Theorem \ref{t:main thm}, several results tend to show that Fuchsian representations are ``minimal'' among Hitchin representations (see \cite[Proposition 10.1]{Hitchin'}, and more recently \cite{PS}). 

In the case where $n=3$, the work of Loftin \cite{Loftin}, together with a recent result of Benoist--Hulin \cite[Proposition 3.4]{BH}, implies that one can find Hitchin representations ``as big as we want''. More precisely, for any Fuchsian representation $j$ and any constant $C$, there is a Hitchin representation $\rho$ in $\PSL(3,\R)$ such that \[L_\rho \geq C L_j\]
(see \cite{Nie} or \cite[Corollary 3.6]{Tholozan'}). We expect this to be true in any dimension $n\geq 3$.

Our theorem can thus be generalized as follows: for any linear representation $\rho$, there is a Hitchin representation $j$ in $\PSL(3,\R)$ such that $L_j \geq L_\rho$. However, this fact cannot give any interesting control on higher rank representations such as a systole or a Bers constant, precisely because Hitchin representations can be ``as big as we want''.\\

A more subtle generalization of our theorem could arise in the theory of Higgs bundles. Indeed, one could hope that, in any fiber of the Hitchin fibration, the Hitchin representation maximizes every translation length. For representations in $\PSL(2,\C) = \Isom^+(\H^3)$, this is a reformulation of our theorem. A generalization in higher dimension would be a step forward in understanding the relation between geometric properties of linear representations and their parametrization by Higgs bundles.

\end{document}